\begin{document}

\title[Unique Optima of the Delsarte Linear Program]{Unique Optima of the Delsarte Linear Program}
\author[Rupert Li]{Rupert Li}
\date{\today}

\maketitle

\begin{abstract}
The Delsarte linear program is used to bound the size of codes given their block length $n$ and minimal distance $d$ by taking a linear relaxation from codes to quasicodes.
We study for which values of $(n,d)$ this linear program has a unique optimum: while we show that it does not always have a unique optimum, we prove that it does if $d>n/2$ or if $d \leq 2$.
Introducing the Krawtchouk decomposition of a quasicode, we prove there exist optima to the $(n,2e)$ and $(n-1,2e-1)$ linear programs that have essentially identical Krawtchouk decompositions, revealing a parity phenomenon among the Delsarte linear programs.
We generalize the notion of extending and puncturing codes to quasicodes, from which we see that this parity relationship is given by extending/puncturing.
We further characterize these pairs of optima, in particular demonstrating that they exhibit a symmetry property, effectively halving the number of decision variables.
\end{abstract}

\section{Introduction}

The practical problem of communicating over a noisy channel, whose study was initiated by Hamming \cite{hamming1950error}, can be modeled by the problem of choosing as many words as possible for our code such that no two words are less than $d$ distance apart.
Let $\F_q$ denote an alphabet with $q$ elements, and let $|x-y|$ denote the \emph{Hamming distance} between words $x,y \in \F_q^n$, i.e., the number of indices for which the words have different letters.
This notation suggests $\F_q$ has the additional structure of a finite field, which is often used to describe certain particularly elegant codes, but is not necessary for the definition of the problem.
However, we will pick a distinguished word $0 \in \F_q^n$ and define the \emph{weight} of $x$, denoted $|x|$, to be $|x-0|$.

A \emph{code} of length $n$ is then simply a nonempty subset $\mathcal{C}\subseteq \F_q^n$.
Its \emph{minimal distance} $d$ is given by
\[ d = \min_{\substack{x,y \in \mathcal{C} \\ x \neq y}} |x-y|. \]
The value of $d$ corresponds to how error-resistant the code is: if we only use words in $\mathcal{C}$ rather than arbitrary words in $\F_q^n$, then if up to $d-1$ letters are changed (perhaps due to an error in storage, or due to communication over a noisy transmission channel), it is impossible for one word to be changed to another valid word, so any such modified word will be detected as a word not in the code.
Thus, we say that the code \emph{detects} $d-1$ errors, and similarly it \emph{corrects} $\floor{(d-1)/2}$ errors, in the sense that if at most $\floor{(d-1)/2}$ errors occur, the original word can be determined.

Immediately, a tradeoff manifests itself: in order to transmit more information per word, one would like the code to have larger \emph{size} $|\mathcal{C}|$, i.e., the number of words in the code, but this causes the minimal distance to decrease, reducing the code's resistance to errors.
Hence, the basic question of codes is, given length $n$, alphabet size $q$, and a lower bound for the minimal distance $d$, what is the maximum possible size of the code?

One may view this problem as the sphere-packing problem in Hamming space $\F_q^n$ rather than Euclidean space $\R^n$.
The sphere-packing problem is the problem of how to most densely pack unit balls into $\R^n$ without overlap, or equivalently the problem of picking ``as many" points in $\R^n$ as possible such that the minimal distance $d$ is at least 2, where ``as many" refers to the density of points contained in a closed ball as the radius of this ball goes to infinity.
It has been solved for $n \leq 3$ (see Hales \cite{hales2000cannonballs,hales2005proof}), as well as famously for $n=8$ by Viazovska \cite{viazovska2017sphere} and for $n=24$ by Cohn, Kumar, Miller, Radchenko, and Viazovska \cite{cohn2017sphere}.
One complication that arises in Hamming space compared to Euclidean space is that due to the continuity of $\R^n$, the choice of the lower bound on minimal distance $d$ does not matter as the space can be dilated appropriately, but this is not the case in Hamming space, where the choice of $d$ nontrivially changes the structure of the largest codes.
For more background on sphere-packing, error-correcting codes, and their connections, we refer readers to Conway and Sloane \cite{conway2013sphere}.

We now introduce concepts that will be useful in addressing the question of the largest possible code given $n$ and $d$, where we fix $q=2$ for the remainder of the paper.
The $j$-th \emph{Krawtchouk polynomial} is defined by
\[ K_j(i;n) = \sum_{k=0}^j (-1)^k \binom{i}{k}\binom{n-i}{j-k}, \]
which we typically write as $K_j(i)$ as $n$ will be clear from the context.
We will use the convention that $\binom{a}{b}=0$ if $0 \leq b \leq a$ does not hold, so $K_j(i;n)=0$ if $j\not\in[0,n]$ or $i\not\in[0,n]$, where we define $[a,b]=\{x\in\Z\mid a\leq x \leq b\}$.
For a word $x \in \F_2^n$ of weight $i$, the Krawtchouk polynomial $K_j(i)$ is the sum of $(-1)^{\left<x,y\right>}$ over all words $y \in \F_2^n$ of weight $j$, where $\left<x,y\right>$ is the inner product of $x$ and $y$ over the finite field $\F_2$, which up to parity equals the number of indices for which $x$ and $y$ are both 1.
From this interpretation, we typically consider $i$ and $j$ for $0 \leq i,j \leq n$, so the Krawtchouk polynomials can be condensed into the \emph{Krawtchouk matrix} $K$, the $(n+1)\times(n+1)$ matrix given by $K_{ji} = K_j(i)$ for $0 \leq i,j \leq n$.
Hence, we let $K_j$ denote the $j$-th row of $K$, representing the $j$-th Krawtchouk polynomial.

The \emph{distance distribution} of a code $\mathcal{C}\subseteq\F_q^n$ is the vector $A=(A_0,\dots,A_n)$, where
\[ A_i = \frac{1}{|\mathcal{C}|} \left|\left\{(x,y)\in \mathcal{C}^2 : |x-y|=i\right\}\right| \]
for $0 \leq i \leq n$.
The normalization factor of $|\mathcal{C}|^{-1}$ ensures that $A_0=1$ and $\sum_{i=0}^n A_i = |\mathcal{C}|$.
If a lower bound $d$ on the minimal distance is specified, this corresponds to requiring $A_i = 0$ for all $i \in [d-1]$, where $[a]$ denotes the set $\{1,\dots,a\}$.
The \emph{support} of $\mathcal{C}$ is $S=\{i>0\mid A_i > 0\}$.

Delsarte \cite{delsarte1972bounds} proved that the distance distribution of any code must satisfy certain inequalities expressed in terms of Krawtchouk polynomials.
This yields the \emph{Delsarte linear program}, which, for a given pair of integers $(n,d)$ such that $1 \leq d \leq n$, is given by
\begin{align}
    \max \quad & \sum_{i=0}^n A_i \notag \\
    \text{such that} \quad & \sum_{i=0}^n A_i K_j(i) \geq 0 \quad \text{for all } j \in [n] \label{eq: Delsarte inequalities} \\
    & A_i = 0 \quad \text{for all } i \in [d-1] \notag \\
    & A_0 = 1 \notag \\
    & A_i \geq 0 \quad \text{for all } i \in [n]. \notag
\end{align}
The inequalities in \cref{eq: Delsarte inequalities} are referred to as the \emph{Delsarte inequalities}.
Any code $\mathcal{C}\subseteq \F_2^n$ whose minimal distance is at least $d$ has a distance distribution that is a feasible point of the Delsarte linear program, and as $|\mathcal{C}|=\sum_{i=0}^n A_i$, we find the optimal objective value of the Delsarte linear program is an upper bound on $|\mathcal{C}|$.
As an arbitrary feasible solution $A$ may not actually be realized as the distance distribution of a code, we refer to these points $A=(A_0,\dots,A_n)$ as \emph{quasicodes}.
Let the feasible region for this linear program, which is a convex polytope in $\R^{n-d+1}$ corresponding to variables $A_d$ through $A_n$, be denoted by $P$.

The dual of the Delsarte linear program is given by
\begin{align*}
    \min \quad & \sum_{j=0}^n c_j \binom{n}{j} \\
    \text{such that} \quad & \sum_{j=0}^n c_j K_j(i) \leq 0 \quad \text{for all } i \in [d,n] \\
    & c_j \geq 0 \quad \text{for all } j \in [n] \\
    & c_0 = 1.
\end{align*}

By complementary slackness, for any optimal quasicode $A^*$ and optimal dual solution $c^*$, for all $i \in [d,n]$, if $A^*_i > 0$ then $\sum_{j=0}^n c^*_j K_j(i) = 0$.
And for all $j \in [n]$, if $c^*_j > 0$ then $\sum_{i=0}^n A^*_i K_j(i) = 0$.

The main results of this paper comprise \cref{section: Krawtchouk decomposition of optimum,section: extending and puncturing}.
We introduce the \emph{Krawtchouk decomposition} of a quasicode $A$, which is the vector $b=(b_0,\dots,b_n)$ given by
\[ b_j = \frac{1}{\binom{n}{j}}\sum_{i=0}^n A_i K_j(i), \]
so that for all $i$,
\[ A_i = \binom{n}{i}(b_0 K_0(i) + \cdots + b_n K_n(i))/2^n. \]
We prove that there exist optima, i.e., feasible points achieving the optimal value, of the $(n,2e)$ and $(n-1,2e-1)$ Delsarte linear programs whose Krawtchouk decompositions agree on indices 0 through $n-1$, inclusive.
This reveals a previously unseen parity phenomenon in the Delsarte linear program.
Moreover, this parity phenomenon manifests from the generalization of extending and puncturing codes to quasicodes.
Two common practical operations performed on codes are extending a code by adding a parity check bit and puncturing a code by removing a bit.
We introduce the generalization of these two operations to quasicodes, and show that solutions to the $(n-1,2e-1)$ and $(n,2e)$ Delsarte linear programs can be sent to each other by extending and puncturing, while still preserving the optimal value.
Thus, extending a $(n-1,2e-1)$ optima yields a $(n,2e)$ optima, and vice versa for puncturing.
In particular, puncturing corresponds to truncating the Krawtchouk decomposition, proving the parity phenomenon.

The parity phenomenon suggests that quasicodes fundamentally possess the same structure as codes with respect to extending and puncturing.
We additionally prove a symmetry phenomenon that shows the structure of even codes persists among quasicodes.
When $d$ is even, efficient codes are typically \emph{even}, meaning the distance distribution $A$ has even support $S \subset 2\Z$.
We prove that when $d$ is even, the $(n,d)$ Delsarte linear program always has an even optimum, demonstrating that the evenness structure extends from codes to quasicodes.
The quasicode $A$ being even corresponds to the Krawtchouk decomposition $b$ being symmetric, i.e., $b_j=b_{n-j}$ for all $j$, and thus this proves our symmetry phenomenon.

In \cref{section: Krawtchouk}, we provide some preliminary properties of the Krawtchouk polynomials that will be important for later sections.
In \cref{section: uniqueness of optima}, we prove that if $d>n/2$ or if $d\leq 2$, the Delsarte linear program has a unique optimum.
We also show that the dual does not have a unique optimum in many cases, and present some examples in which the primal does not have a unique optimum.
In \cref{section: Krawtchouk decomposition of optimum}, we define the Krawtchouk decomposition of a quasicode and then present the parity and symmetry phenomena.
In \cref{section: extending and puncturing}, we generalize the notion of extending and puncturing codes to quasicodes, allowing us to prove the parity and symmetry phenomena via extending/puncturing quasicodes.

\section{Preliminaries of the Krawtchouk polynomials}\label{section: Krawtchouk}

We now introduce numerous classically known properties of the Krawtchouk polynomials that will be useful throughout the paper.

\begin{lemma}[Reciprocity of the Krawtchouk polynomials; see \protect{\cite[p.\ 152]{macwilliams1977theory}}]\label{lemma: Krawtchouk reciprocity}
For all $0 \leq i,j \leq n$, we have $\binom{n}{i}K_j(i) = \binom{n}{j}K_i(j)$.
\end{lemma}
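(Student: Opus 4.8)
The plan is to prove the reciprocity identity $\binom{n}{i}K_j(i) = \binom{n}{j}K_i(j)$ by exhibiting both sides as (essentially) the same double sum. Starting from the definition $K_j(i) = \sum_{k=0}^j (-1)^k \binom{i}{k}\binom{n-i}{j-k}$, I would multiply through by $\binom{n}{i}$ and try to rewrite the product of three binomial coefficients $\binom{n}{i}\binom{i}{k}\binom{n-i}{j-k}$ in a form that is manifestly symmetric in $i$ and $j$. The natural tool is the subset-selection (trinomial revision) identity $\binom{n}{i}\binom{i}{k} = \binom{n}{k}\binom{n-k}{i-k}$, after which one still has to massage $\binom{n-k}{i-k}\binom{n-i}{j-k}$; alternatively, re-indexing and comparing with the corresponding expansion of $\binom{n}{j}K_i(j)$ term by term should reveal the terms match under $k \mapsto k$ once everything is written with factorials.

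Concretely, I would convert everything to factorials: the general term of $\binom{n}{i}K_j(i)$ is $(-1)^k \frac{n!}{i!(n-i)!}\cdot\frac{i!}{k!(i-k)!}\cdot\frac{(n-i)!}{(j-k)!(n-i-j+k)!}$, which simplifies to $(-1)^k \frac{n!}{k!(i-k)!(j-k)!(n-i-j+k)!}$. This expression is visibly symmetric under swapping $i \leftrightarrow j$ (the four factors in the denominator are $k!$, $(i-k)!$, $(j-k)!$, and $(n-i-j+k)!$, and the last is symmetric in $i,j$). Hence the term-by-term sum over $k$ equals the analogous sum for $\binom{n}{j}K_i(j)$, giving the identity. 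I would need to note the summation ranges agree: both sums effectively run over all integers $k$, with the factorial-reciprocal convention ($1/m! = 0$ for $m<0$) killing out-of-range terms, consistent with the paper's convention that $\binom{a}{b}=0$ outside $0\le b\le a$.

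The only real obstacle is bookkeeping: making sure the convention $\binom{a}{b}=0$ for $b\notin[0,a]$ matches the factorial rewriting at the boundary (e.g., when $i-k$, $j-k$, or $n-i-j+k$ is negative), so that extending or truncating the summation index does not introduce spurious terms. This is routine once stated carefully. I would also remark that the result is classical (citing MacWilliams–Sloane as already referenced in the statement) and that this symmetric factorial form of the general term is the cleanest way to see it; no induction or generating-function argument is needed.
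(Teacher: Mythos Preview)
Your argument is correct: writing the general term of $\binom{n}{i}K_j(i)$ in the factorial form
\[
(-1)^k\,\frac{n!}{k!\,(i-k)!\,(j-k)!\,(n-i-j+k)!}
\]
makes the $i\leftrightarrow j$ symmetry manifest, and your remark about the summation range (the convention $\binom{a}{b}=0$ for $b\notin[0,a]$ kills exactly the terms where one of the denominator factorials would have a negative argument, so both sums may be taken over all $k\in\Z$) handles the boundary cases cleanly.

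There is nothing to compare against in the paper: the lemma is quoted from MacWilliams--Sloane and used without proof. Your self-contained verification is a standard one and would be perfectly acceptable to include; the only stylistic point is that since the paper treats this as classical background, a one-line reference to the symmetric multinomial form (or simply the citation) would suffice in context.
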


The well-established basic properties in \cref{lemma: Krawtchouk basic properties} directly follow from the definition of the Krawtchouk polynomials and \cref{lemma: Krawtchouk reciprocity}.

\begin{lemma}\label{lemma: Krawtchouk basic properties}
The Krawtchouk polynomials satisfy the following properties, for all $0 \leq i,j \leq n$:
\begin{enumerate}
    \item $K_0(i) = 1$.
    \item $K_j(0) = \binom{n}{j}$.
    \item $K_{n-j}(i) = (-1)^i K_j(i)$.
    \item $K_j(n-i) = (-1)^j K_j(i)$.
\end{enumerate}
\end{lemma}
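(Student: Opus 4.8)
The plan is to dispatch parts (1) and (2) by direct substitution into the defining sum, and to obtain parts (3) and (4) either from the complementation symmetry underlying the combinatorial interpretation of $K_j(i)$ or, more economically, by combining a single generating-function identity with \cref{lemma: Krawtchouk reciprocity}.

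For (1), setting $j=0$ in the definition leaves only the $k=0$ term, giving $K_0(i) = \binom{i}{0}\binom{n-i}{0} = 1$. For (2), setting $i=0$ forces $\binom{0}{k}=0$ for every $k \geq 1$, so again only the $k=0$ term survives and $K_j(0) = \binom{0}{0}\binom{n}{j} = \binom{n}{j}$. Both of these are immediate.

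For (3), I would use the generating function $\sum_{j=0}^n K_j(i)\,z^j = (1-z)^i(1+z)^{n-i}$, which follows from the definition by expanding the product and collecting the coefficient of $z^j$ via the Vandermonde-type convolution $\sum_k (-1)^k \binom{i}{k}\binom{n-i}{j-k}$. Replacing $z$ by $1/z$ and multiplying through by $z^n$ turns the left-hand side into $\sum_j K_{n-j}(i)\,z^j$ and the right-hand side into $(z-1)^i(z+1)^{n-i} = (-1)^i (1-z)^i(1+z)^{n-i}$; comparing coefficients of $z^j$ yields $K_{n-j}(i) = (-1)^i K_j(i)$. Equivalently, one may argue combinatorially: the complementation map $y \mapsto \mathbf 1 - y$ is a bijection from weight-$j$ words onto weight-$(n-j)$ words under which $\langle x,y\rangle$ changes by $|x| = i$ modulo $2$, so summing $(-1)^{\langle x,y\rangle}$ picks up exactly a factor $(-1)^i$.

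For (4), the cleanest route is to feed part (3) into \cref{lemma: Krawtchouk reciprocity} twice. Since $\binom{n}{n-i} = \binom{n}{i}$, reciprocity gives $\binom{n}{i}K_j(n-i) = \binom{n}{j}K_{n-i}(j)$; applying part (3) (with the roles of weight and degree interchanged) gives $K_{n-i}(j) = (-1)^j K_i(j)$; and a second application of reciprocity rewrites $\binom{n}{j}K_i(j) = \binom{n}{i}K_j(i)$. Dividing by $\binom{n}{i}$ yields $K_j(n-i) = (-1)^j K_j(i)$. None of these steps is a genuine obstacle; the only thing requiring a little care is bookkeeping which variable plays the role of ``weight'' and which plays ``degree'' when invoking reciprocity and part (3), since the Krawtchouk matrix $K$ is not symmetric.
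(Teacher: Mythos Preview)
Your proposal is correct and aligns with what the paper indicates: the paper does not spell out a proof but merely remarks that the properties ``directly follow from the definition of the Krawtchouk polynomials and \cref{lemma: Krawtchouk reciprocity}.'' Your treatment of (1)--(2) by direct substitution, of (3) via the generating function $(1-z)^i(1+z)^{n-i}$ (an immediate repackaging of the definition) or the complementation bijection, and of (4) by combining (3) with two applications of reciprocity, is exactly the sort of verification the paper has in mind.
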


\begin{lemma}[Orthogonality of the Krawtchouk polynomials; see \protect{\cite[p.\ 151]{macwilliams1977theory}}]\label{lemma: Krawtchouk orthogonality}
For all $0 \leq j,k \leq n$,
\begin{align*}
    \frac{1}{2^n} \sum_{i=0}^n \binom{n}{i} K_j(i) K_k(i) = \binom{n}{j} \delta_{jk},
\end{align*}
where $\delta$ is the Kronecker delta function.
\end{lemma}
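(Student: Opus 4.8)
The plan is to use the generating function of the Krawtchouk polynomials, which follows directly from the defining sum. Expanding the definition and recognizing $\sum_k (-1)^k \binom{i}{k} z^k = (1-z)^i$ and $\sum_m \binom{n-i}{m} z^m = (1+z)^{n-i}$, the Cauchy product (with $j = k+m$) gives
\[ \sum_{j=0}^n K_j(i) z^j = (1-z)^i (1+z)^{n-i} \]
for every $0 \le i \le n$.

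I would then introduce a second formal variable $w$ and form the double sum
\[ F(z,w) = \sum_{i=0}^n \binom{n}{i} \Bigl(\sum_{j} K_j(i) z^j\Bigr)\Bigl(\sum_{k} K_k(i) w^k\Bigr) = \sum_{i=0}^n \binom{n}{i} \bigl[(1-z)(1-w)\bigr]^i \bigl[(1+z)(1+w)\bigr]^{n-i}, \]
and collapse the sum over $i$ by the binomial theorem to $\bigl[(1-z)(1-w) + (1+z)(1+w)\bigr]^n = (2+2zw)^n = 2^n(1+zw)^n$. Extracting the coefficient of $z^j w^k$ from both expressions then finishes the argument: on the left it is $\sum_{i=0}^n \binom{n}{i} K_j(i) K_k(i)$, and on the right $2^n \sum_{m=0}^n \binom{n}{m}(zw)^m$ contributes $2^n\binom{n}{j}$ if $j=k$ and $0$ otherwise. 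Dividing by $2^n$ yields the claim.

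There is no substantial obstacle here; the work is entirely bookkeeping. The points to be careful about are that the generating-function identity is a genuine polynomial identity so that coefficient comparison is legitimate, and that the index ranges $0 \le j, k \le n$ agree on both sides, which is consistent with the convention $K_j(i;n) = 0$ for $j \notin [0,n]$. A variant route is to first invoke reciprocity (\cref{lemma: Krawtchouk reciprocity}) to write $\binom{n}{i}K_j(i) = \binom{n}{j}K_i(j)$, reducing the statement to the matrix identity $K^2 = 2^n I$; however, the cleanest proof of the latter uses the same generating function, so I would carry out the direct computation above.
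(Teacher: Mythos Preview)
Your argument is correct: the generating-function identity $\sum_j K_j(i)z^j=(1-z)^i(1+z)^{n-i}$ follows immediately from the defining sum, the binomial collapse to $2^n(1+zw)^n$ is valid, and coefficient extraction gives exactly the stated orthogonality. The paper itself does not supply a proof of this lemma---it is quoted as a classical fact with a reference to MacWilliams--Sloane---so there is nothing to compare against; your write-up is a clean, standard derivation and would serve well as a self-contained proof.
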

\cref{lemma: Krawtchouk orthogonality} allows you to recover the coefficients of a linear combination of Krawtchouk polynomials: if $v=(v_0,\dots,v_n)=b_0K_0 + \cdots + b_n K_n$, then
\begin{align*}
    \frac{1}{2^n}\sum_{i=0}^n \binom{n}{i}v_i K_j(i) = \binom{n}{j}b_j.
\end{align*}
It also implies $K^2=2^n I$, as
\[ (K^2)_{jk} = \sum_{i=0}^n K_j(i) K_i(k) = \sum_{i=0}^n \frac{\binom{n}{i} K_j(i) K_k(i)}{\binom{n}{k}} = 2^n \delta_{jk}. \]
In particular, $K$ is non-singular and the Krawtchouk polynomials $K_j$ are linearly independent.

The following result gives the column sums of the Krawtchouk matrix $K$.

\begin{lemma}
[\protect{\cite[p.\ 153]{macwilliams1977theory}}]
\label{lemma: Krawtchouk column-sum zero}
For all $n$,
\[ \sum_{j=0}^n K_j = \left(2^n,0,0,\dots,0\right).\]
\end{lemma}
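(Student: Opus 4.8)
The plan is to deduce this directly from the identity $K^2 = 2^n I$ noted just after \cref{lemma: Krawtchouk orthogonality}, together with property (1) of \cref{lemma: Krawtchouk basic properties}. The $i$-th entry of the row vector $\sum_{j=0}^n K_j$ is $\sum_{j=0}^n K_j(i)$. Since $K_0(j) = 1$ for every $j$ by property (1), the $0$-th row of $K$ is the all-ones vector, so I can rewrite $\sum_{j=0}^n K_j(i) = \sum_{j=0}^n K_0(j) K_j(i) = \sum_{j=0}^n K_{0j} K_{ji} = (K^2)_{0i}$. Because $K^2 = 2^n I$, this equals $2^n \delta_{0i}$, which is exactly the claim: the $0$-th entry is $2^n$ and every other entry vanishes.

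An alternative, more self-contained route avoids invoking $K^2 = 2^n I$: fix a word $x \in \F_2^n$ of weight $i$ and use the combinatorial description of $K_j(i)$ as $\sum_{y : |y| = j} (-1)^{\langle x, y \rangle}$ given in the introduction. Summing over all $j$ from $0$ to $n$ removes the weight restriction on $y$, giving $\sum_{j=0}^n K_j(i) = \sum_{y \in \F_2^n} (-1)^{\langle x, y\rangle}$, the standard character sum over $\F_2^n$: it equals $2^n$ when $x = 0$ (that is, when $i = 0$) and $0$ otherwise, since for $x \neq 0$ the functional $\langle x, \cdot\rangle$ vanishes on exactly half of $\F_2^n$. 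Equivalently, one can set $z = 1$ in the generating-function identity $\sum_{j=0}^n K_j(i) z^j = (1-z)^i (1+z)^{n-i}$, which follows from the binomial theorem applied to the definition of $K_j$.

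There is no real obstacle here; the only decision is which of these equivalent derivations to present. I would lead with the two-line argument from $K^2 = 2^n I$, since that identity is already established on the page and it makes the result an immediate corollary, relegating the character-sum or generating-function derivation to a remark.
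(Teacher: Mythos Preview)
Your proposal is correct. Note, however, that the paper does not actually supply its own proof of this lemma: it is stated with a citation to MacWilliams--Sloane and used without argument. Your derivation from $K^2 = 2^n I$ (reading $\sum_j K_j(i)$ as the $(0,i)$ entry of $K^2$ via $K_0(j)=1$) is a clean two-line justification that fits seamlessly with the surrounding material, and the character-sum and generating-function alternatives you mention are equally valid self-contained routes; any of these would serve well as the omitted proof.
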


The Krawtchouk polynomials also satisfy a three-term recurrence.
\begin{lemma}
[\protect{\cite[p.\ 152]{macwilliams1977theory}}]
\label{lemma: Krawtchouk three-term recurrence}
For all $0 \leq i,j \leq n$,
\[ (j+1)K_{j+1}(i)=(n-2i)K_j(i)-(n-j+1)K_{j-1}(i;n). \]
\end{lemma}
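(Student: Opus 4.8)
The plan is to prove the recurrence directly from the combinatorial definition $K_j(i) = \sum_{k=0}^j (-1)^k \binom{i}{k}\binom{n-i}{j-k}$, treating it as a polynomial identity in $i$ for each fixed $j$ (and fixed $n$). The cleanest route I would take is via generating functions: the Krawtchouk polynomials have the well-known generating function $\sum_{j=0}^n K_j(i)\, z^j = (1-z)^i (1+z)^{n-i}$, which follows immediately from the definition by splitting the product of binomial series $\left(\sum_k \binom{i}{k}(-z)^k\right)\left(\sum_\ell \binom{n-i}{\ell} z^\ell\right)$ and collecting the coefficient of $z^j$. Call this generating function $F(z) = F_i(z)$.

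First I would differentiate $F(z) = (1-z)^i(1+z)^{n-i}$ with respect to $z$. A short computation gives
\[ (1-z^2) F'(z) = \bigl[(n-i)(1-z) - i(1+z)\bigr] F(z) = \bigl[(n-2i) - nz\bigr] F(z). \]
Next I would compare coefficients of $z^j$ on both sides. On the left, $(1-z^2)F'(z) = \sum_j \bigl[(j+1)K_{j+1}(i) - (j-1)K_{j-1}(i)\bigr] z^j$, since $F'(z) = \sum_j (j+1)K_{j+1}(i) z^j$ after reindexing. On the right, $\bigl[(n-2i)-nz\bigr]F(z) = \sum_j \bigl[(n-2i)K_j(i) - n K_{j-1}(i)\bigr] z^j$. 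Equating the coefficient of $z^j$ yields
\[ (j+1)K_{j+1}(i) - (j-1)K_{j-1}(i) = (n-2i)K_j(i) - n K_{j-1}(i), \]
and moving the $K_{j-1}$ terms to one side gives exactly $(j+1)K_{j+1}(i) = (n-2i)K_j(i) - (n-j+1)K_{j-1}(i)$, as desired.

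Finally I would check the boundary cases to confirm the identity holds for all $0 \leq i,j \leq n$ under the stated conventions: at $j=0$ it reads $K_1(i) = (n-2i)K_0(i) - (n+1)K_{-1}(i) = n-2i$, which matches $K_1(i) = \binom{n-i}{1} - \binom{i}{1}$; and at $j=n$ the left side involves $K_{n+1}(i)=0$, so one checks $(n-2i)K_n(i) = K_{n-1}(i)$ separately (e.g.\ via property (3) of \cref{lemma: Krawtchouk basic properties}, $K_n(i)=(-1)^i$, together with the generating-function value at the top degree). I do not expect any genuine obstacle here; the only mild subtlety is bookkeeping at the endpoints of the index range, which the $\binom{a}{b}=0$ convention handles automatically. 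An alternative, more pedestrian approach would substitute the defining sum directly and use the Pascal recurrence $\binom{m}{k} = \binom{m-1}{k} + \binom{m-1}{k-1}$ on both $\binom{i}{k}$ and $\binom{n-i}{j-k}$, but this is messier than the generating-function argument and I would only fall back on it if a fully elementary proof were preferred.
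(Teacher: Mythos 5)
The paper does not prove this lemma; it simply cites it as a classical fact from MacWilliams and Sloane \cite[p.\ 152]{macwilliams1977theory}, so there is no in-paper argument to compare against. Your generating-function derivation is a correct, standard proof of the recurrence. The key computation
\[
(1-z^2)\,F'(z) = \bigl[-i(1+z) + (n-i)(1-z)\bigr]F(z) = \bigl[(n-2i) - nz\bigr]F(z)
\]
with $F(z)=(1-z)^i(1+z)^{n-i}$ is right, and comparing the coefficient of $z^j$ on each side gives
\[
(j+1)K_{j+1}(i) - (j-1)K_{j-1}(i) = (n-2i)K_j(i) - nK_{j-1}(i),
\]
which rearranges to the stated identity. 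One small remark: the separate boundary checks at $j=0$ and $j=n$ are not actually needed, since $F$ is a genuine polynomial of degree $n$, the identity $(1-z^2)F'(z)=[(n-2i)-nz]F(z)$ is a polynomial identity of degree $n+1$, and the convention $K_j=0$ for $j\notin[0,n]$ is exactly what coefficient extraction from that polynomial produces. Your spot-check at $j=n$ is nonetheless consistent: $K_n(i)=(-1)^i$ and $K_{n-1}(i)=(-1)^i(n-2i)$ by \cref{lemma: Krawtchouk basic properties}(3), so $(n-2i)K_n(i)=K_{n-1}(i)$ as required.
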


While typically $n$ is kept fixed, we present the following useful recurrence for Krawtchouk polynomials between block lengths $n$ and $n-1$.
\begin{lemma}[\protect{\cite[Proposition~2.1(1)]{chihara1990zeros}}]\label{lemma: Krawtchouk recurrence}
For all $n \geq 1$ and $0 \leq i\leq n-1$,
\[ K_j(i;n)=K_j(i;n-1)+K_{j-1}(i;n-1) \]
for any $0 \leq j \leq n$.
\end{lemma}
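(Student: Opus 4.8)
The plan is to read the identity off from the generating function for the Krawtchouk polynomials. Expanding each factor by the binomial theorem shows that, for fixed $n$ and $i$,
\[ (1-z)^i(1+z)^{n-i} = \sum_{j=0}^n K_j(i;n)\, z^j, \]
the sign $(-1)^k$ and the index $k$ in the definition of $K_j(i;n)$ recording how many factors of $-z$ are chosen from $(1-z)^i$. Since the hypothesis $0 \le i \le n-1$ guarantees $n-1-i \ge 0$, we may peel off one factor of $(1+z)$:
\[ (1-z)^i(1+z)^{n-i} = (1-z)^i(1+z)^{n-1-i} + z\,(1-z)^i(1+z)^{n-1-i}. \]
The first summand on the right equals $\sum_j K_j(i;n-1)\,z^j$ and the second equals $\sum_j K_j(i;n-1)\,z^{j+1} = \sum_j K_{j-1}(i;n-1)\,z^j$, so comparing the coefficient of $z^j$ on both sides yields the claim. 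The binomial-coefficient conventions adopted after the definition of $K_j$ make this hold verbatim at the boundary values $j=0$ and $j=n$ (using $K_{-1}\equiv 0$), so no separate casework is needed.

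An equivalent route avoids generating functions entirely: apply Pascal's rule $\binom{n-i}{j-k} = \binom{n-1-i}{j-k} + \binom{n-1-i}{j-1-k}$ inside $K_j(i;n) = \sum_{k=0}^j (-1)^k \binom{i}{k}\binom{n-i}{j-k}$ and split into two sums; the first is exactly $K_j(i;n-1)$, while the second, after noting that its $k=j$ term vanishes since $\binom{n-1-i}{-1}=0$, is $\sum_{k=0}^{j-1}(-1)^k\binom{i}{k}\binom{n-1-i}{j-1-k} = K_{j-1}(i;n-1)$. One could also argue combinatorially: as $K_j(i;n)$ depends only on the weight $i \le n-1$, fix a weight-$i$ word $x \in \F_2^n$ with last coordinate $0$ and partition the weight-$j$ words $y$ by their last coordinate; since $\left<x,y\right>$ is unchanged upon deleting the last coordinate, the contributions from last coordinate $0$ and $1$ are $K_j(i;n-1)$ and $K_{j-1}(i;n-1)$ respectively.

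There is no real obstacle here; this is a routine binomial identity. The only points deserving a moment's attention are that the hypothesis $0 \le i \le n-1$ is precisely what keeps $n-1-i$ nonnegative (so that $(1+z)^{n-1-i}$ is a genuine polynomial and the binomials $\binom{n-1-i}{\cdot}$ behave as expected), and that the conventions on out-of-range binomial coefficients make the endpoint cases $j=0$ and $j=n$ automatic. I would present the generating-function argument as the proof and perhaps remark on the combinatorial interpretation in passing.
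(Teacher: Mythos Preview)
Your argument is correct; the generating-function identity $(1-z)^i(1+z)^{n-i}=\sum_j K_j(i;n)z^j$ together with the factorization $(1+z)\cdot(1-z)^i(1+z)^{n-1-i}$ yields the recurrence immediately, and your treatment of the boundary cases $j=0,n$ via the paper's conventions is accurate. The Pascal's-rule and combinatorial variants you sketch are also valid.

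There is nothing to compare against, however: the paper does not supply its own proof of this lemma but simply cites it from Chihara--Stanton \cite[Proposition~2.1(1)]{chihara1990zeros}. Your write-up is therefore a self-contained addition rather than a reworking of an existing argument. If you present it, the generating-function version is the cleanest; the combinatorial interpretation is a nice remark but not needed.
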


In the reverse direction, we provide another relation moving from block length $n-1$ to $n$.
\begin{lemma}[\protect{\cite[Proposition~2.1(4)]{chihara1990zeros}}]\label{lemma: Krawtchouk extension}
For all $0 \leq i,j \leq n-1$,
\[ 2K_j(i;n-1)=K_j(i;n)+K_j(i+1;n). \]
\end{lemma}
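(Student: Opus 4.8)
The plan is to derive this identity directly from \cref{lemma: Krawtchouk recurrence} together with the reflection property in part~(4) of \cref{lemma: Krawtchouk basic properties}, so that no essentially new computation is needed.

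First, \cref{lemma: Krawtchouk recurrence}, applied in block length $n$ at weight $i$ (legitimate since $0\le i\le n-1$), gives
\[ K_j(i;n) = K_j(i;n-1) + K_{j-1}(i;n-1). \]
For the remaining term $K_j(i+1;n)$, I would rewrite it using part~(4) of \cref{lemma: Krawtchouk basic properties} as $K_j(i+1;n) = (-1)^j K_j(n-i-1;n)$, then apply \cref{lemma: Krawtchouk recurrence} at weight $n-i-1$ (which lies in $[0,n-1]$ precisely because $0\le i\le n-1$) to get $K_j(n-i-1;n) = K_j(n-i-1;n-1) + K_{j-1}(n-i-1;n-1)$, and finally apply part~(4) of \cref{lemma: Krawtchouk basic properties} once more in block length $n-1$, using $n-i-1 = (n-1)-i$, to convert these back: $K_j(n-i-1;n-1) = (-1)^j K_j(i;n-1)$ and $K_{j-1}(n-i-1;n-1) = (-1)^{j-1} K_{j-1}(i;n-1)$. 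After collecting the sign $(-1)^j$, this yields
\[ K_j(i+1;n) = K_j(i;n-1) - K_{j-1}(i;n-1). \]
Adding this to the displayed formula for $K_j(i;n)$ cancels the two $K_{j-1}(i;n-1)$ terms and leaves $2K_j(i;n-1)$, which is the claim.

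An equally short alternative is to use the generating function $\sum_{j\ge 0} K_j(i;n) z^j = (1-z)^i(1+z)^{n-i}$, which is immediate from the binomial expansion of the definition of $K_j(i;n)$: then $\sum_{j\ge 0}\bigl(K_j(i;n) + K_j(i+1;n)\bigr) z^j = (1-z)^i(1+z)^{n-i-1}\bigl((1+z)+(1-z)\bigr) = 2(1-z)^i(1+z)^{n-1-i}$, and comparing coefficients of $z^j$ finishes it. A third option is a direct manipulation of $\sum_k (-1)^k\binom{i}{k}\binom{n-i}{j-k} + \sum_k (-1)^k\binom{i+1}{k}\binom{n-i-1}{j-k}$ via Pascal's rule on $\binom{n-i}{j-k}$ and $\binom{i+1}{k}$, after which the extra cross terms telescope to zero under the substitution $k\mapsto k+1$.

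There is no real obstacle here; the only care required is bookkeeping — correctly tracking the parity factors $(-1)^j$ through the two uses of the reflection identity, and checking at each step that the weight arguments (especially $n-i-1$ and the index $j-1$) remain in the ranges demanded by the lemmas invoked, with the boundary cases $j=0$ and $i=n-1$ covered by the conventions $K_{-1}\equiv 0$ and $\binom{a}{b}=0$ when $0\le b\le a$ fails.
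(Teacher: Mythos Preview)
Your argument is correct. The paper does not supply its own proof of this lemma; it is simply cited from \cite{chihara1990zeros}, so there is no in-paper proof to compare against. Your first derivation---apply \cref{lemma: Krawtchouk recurrence} to $K_j(i;n)$ directly, and to $K_j(i+1;n)$ after reflecting $i+1\mapsto n-i-1$ via \cref{lemma: Krawtchouk basic properties}(4), then reflect back in block length $n-1$---goes through cleanly, and the range checks you flag (that $n-i-1\in[0,n-1]$, and that the $j=0$ boundary is handled by the convention $K_{-1}\equiv 0$) are exactly the ones needed. The generating-function alternative is also valid and arguably the slickest of the three.
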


It is known that the magnitude of $K_j(i)$ over all integers $0 \leq i \leq n$ is maximized at $i=0$ and $n$ (see, for example, Dette \cite{dette1995new}).
The following result shows that $i=1$ and $n-1$ are next largest, i.e., $|K_j(i)|$ over all $1 \leq i \leq n-1$ is maximized at $i=1$ and $n-1$, except when $j=\frac{n}{2}$ so that $K_j(1)=\binom{n}{d}\frac{n-2j}{n}=0$.

\begin{lemma}[\protect{\cite[Corollary~10]{dumer2013spherically}}]\label{lemma: Krawtchouk bound 1 n-1}
If $j \neq \frac{n}{2}$, then for all $i \in [n-1]$,
\[ |K_j(1)|=\left|\binom{n}{j}\frac{n-2j}{n}\right| \geq |K_j(i)|. \]
\end{lemma}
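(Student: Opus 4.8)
The statement is \cite[Corollary~10]{dumer2013spherically}, which I would cite directly; if a self-contained argument were wanted, here is the route I would take. First, the two symmetries of \cref{lemma: Krawtchouk basic properties} reduce the problem: part~(3) gives $|K_{n-j}(i)|=|K_j(i)|$ for all $i$, so it suffices to treat $j<n/2$ (the excluded case being $j=n/2$), where $K_j(1)=\binom nj(n-2j)/n>0$; and part~(4) gives $|K_j(n-i)|=|K_j(i)|$, so I may also assume $1\le i\le\lfloor n/2\rfloor$, hence $i\le n-2$ once $n\ge 4$. The claim to prove becomes $|K_j(i)|\le K_j(1)$ for $1\le j<n/2$ (with $j=0$ trivial since $K_0\equiv 1$).

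The plan for the generic case is an induction on $n$, the cases $n\le 3$ being a finite check. Suppose $n\ge 4$ and that we are \emph{not} in the situation ``$n$ odd and $j=(n-1)/2$''; then $j$ and $j-1$ both lie strictly below $(n-1)/2$, so they avoid the midpoint index of block length $n-1$ and the inductive hypothesis bounds each summand in \cref{lemma: Krawtchouk recurrence}:
\[ |K_j(i;n)|=|K_j(i;n-1)+K_{j-1}(i;n-1)|\le K_j(1;n-1)+K_{j-1}(1;n-1). \]
Applying the same recurrence at $i=1$, and using that $K_j(1;n-1)$ and $K_{j-1}(1;n-1)$ are both positive (again since $j,j-1<(n-1)/2$), gives $K_j(1;n)=K_j(1;n-1)+K_{j-1}(1;n-1)$, so the two displays combine to $|K_j(i;n)|\le K_j(1;n)$. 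The mechanism here is that the triangle inequality is tight at $i=1$ precisely because the two Krawtchouk summands share a sign there.

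The hard part, and the main obstacle, is $n$ odd with $j=(n-1)/2$: here \cref{lemma: Krawtchouk recurrence} introduces the midpoint term $K_{(n-1)/2}(\,\cdot\,;n-1)$, which the induction cannot control, so one must instead exploit the exceptional explicit form of this polynomial. Writing $n=2m+1$, the generating identity $\sum_k K_k(i;n)z^k=(1-z)^i(1+z)^{n-i}$ (which one extracts from the description of $K_k(i)$ as the character sum $\sum_{|y|=k}(-1)^{\langle x,y\rangle}$ over weight-$k$ words, for a fixed weight-$i$ word $x$) specializes at $i=m$ to $\sum_k K_k(m;2m+1)z^k=(1-z^2)^m(1+z)$, whence $|K_k(m;2m+1)|=\binom{m}{\lfloor k/2\rfloor}$. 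Feeding this into reciprocity, $\binom{2m+1}{i}K_m(i)=\binom{2m+1}{m}K_i(m)$, shows $|K_m(i;2m+1)|=\binom{2m+1}{m}\binom{m}{\lfloor i/2\rfloor}/\binom{2m+1}{i}$ while $K_m(1;2m+1)=\binom{2m+1}{m}/(2m+1)$, so the desired bound turns into the elementary binomial inequality $(2m+1)\binom{m}{\lfloor i/2\rfloor}\le\binom{2m+1}{i}$ for $1\le i\le 2m$, which I would finish off by a straightforward induction (handling even and odd $i$ separately and using $\binom{2m+1}{i}=\binom{2m+1}{2m+1-i}$ to reduce to $i\le m$).
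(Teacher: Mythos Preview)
The paper does not prove this lemma at all; it simply records it as \cite[Corollary~10]{dumer2013spherically} and moves on. Your first sentence---cite the reference directly---therefore matches the paper's treatment exactly.

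Your optional self-contained argument is not in the paper, but it is sound. The reduction via \cref{lemma: Krawtchouk basic properties}(3),(4) to $j<n/2$ and $1\le i\le\lfloor n/2\rfloor$ is correct; for $n\ge 4$ this forces $i\le n-2$, so the recurrence \cref{lemma: Krawtchouk recurrence} together with the inductive hypothesis at block length $n-1$ applies to both $K_j(\,\cdot\,;n-1)$ and $K_{j-1}(\,\cdot\,;n-1)$ whenever $j,j-1<(n-1)/2$, and the two summands are indeed positive at $i=1$, making the triangle inequality tight there. The exceptional case $n=2m+1$, $j=m$ is handled correctly: the generating-function identity gives $|K_k(m;2m+1)|=\binom{m}{\lfloor k/2\rfloor}$, reciprocity converts the claim to $(2m+1)\binom{m}{\lfloor i/2\rfloor}\le\binom{2m+1}{i}$, and this binomial inequality holds for $1\le i\le 2m$ and $m\ge 2$ (equality at $i\in\{1,2,2m-1,2m\}$, strict in between; one sees this by checking that the ratio $\binom{2m+1}{i}\big/\!\big[(2m+1)\binom{m}{\lfloor i/2\rfloor}\big]$ is unchanged on each pair $(2k+1,2k+2)$ and multiplies by $(2m-2k-1)/(2k+1)\ge 1$ between pairs). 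Since the induction starts at $n\ge 4$, the odd exceptional case has $m\ge 2$, so everything closes.
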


\section{Uniqueness of optima}\label{section: uniqueness of optima}
\subsection{The upper half case}
In this section, we prove that the Delsarte linear program has a unique optimum when $2\ceil{d/2}>n-d$, roughly corresponding to $d$ being at least $n/2$: precisely, if $d$ is even, then $2d>n$, and when $d$ is odd, then $2d\geq n$.
\begin{theorem}\label{theorem: unique optimum upper half}
If $2\ceil{d/2}>n-d$, the Delsarte linear program has a unique optimum, namely the quasicode $A^*$ given by
\[ A^*_i = \begin{cases} 1 & \text{if } i=0 \\ \frac{n}{2d-n} & \text{if } i=d \\ 0 & \text{otherwise} \end{cases} \]
when $d$ is even, and
\[ A^*_i = \begin{cases} 1 & \text{if } i=0 \\ \frac{d+1}{2d-n+1} & \text{if } i=d \\ \frac{n-d}{2d-n+1} & \text{if } i = d+1 \\ 0 & \text{otherwise} \end{cases} \]
when $d$ is odd.
\end{theorem}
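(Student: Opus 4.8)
The plan is to certify the optimality of $A^*$ by linear-programming duality and then promote this to uniqueness using complementary slackness. First I would unpack the hypothesis: $2\ceil{d/2}>n-d$ amounts to $2d>n$ when $d$ is even and to $2d+1>n$ when $d$ is odd, so in either case the denominators $2d-n$ and $2d-n+1$ are positive and all entries of $A^*$ are well-defined and nonnegative; the claimed objective value is $\tfrac{2d}{2d-n}$ in the even case and $\tfrac{2(d+1)}{2d-n+1}$ in the odd case.

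For the dual certificate, when $d$ is even I would take $c^*=(1,\tfrac1{2d-n},0,\dots,0)$: then $\sum_j c^*_j K_j(i)=1+\tfrac{n-2i}{2d-n}$ is at most $0$ for all $i\ge d$ and vanishes exactly at $i=d$, so $c^*$ is dual-feasible with objective $1+\tfrac n{2d-n}=\tfrac{2d}{2d-n}=\sum_i A^*_i$. When $d$ is odd I would take $c^*=(1,\gamma,0,\dots,0,\gamma)$ with $\gamma=\tfrac1{2d-n+1}$ (nonzero entries in positions $0$, $1$, $n$); using $K_n(i)=(-1)^i$ from \cref{lemma: Krawtchouk basic properties}, one gets $\sum_j c^*_j K_j(i)=1+\gamma\bigl(n-2i+(-1)^i\bigr)$, and a short check split on the parity of $i$ shows this is at most $0$ for $i\ge d$, vanishing at $i=d$ among odd indices and at $i=d+1$ among even indices; the objective is $1+\gamma(n+1)=\tfrac{2(d+1)}{2d-n+1}=\sum_i A^*_i$. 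By weak duality, once $A^*$ is shown to be primal-feasible, both $A^*$ and $c^*$ are optimal.

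The substantive step is checking that $A^*$ satisfies the Delsarte inequalities $b_j:=\tfrac1{\binom nj}\sum_i A^*_iK_j(i)\ge 0$ for all $j\in[n]$. When $d$ is even, reciprocity (\cref{lemma: Krawtchouk reciprocity}) rewrites $\tfrac{K_j(d)}{\binom nj}$ as $\tfrac{K_d(j)}{\binom nd}$, so $b_j=1+\tfrac n{(2d-n)\binom nd}K_d(j)$; since reciprocity and $K_1(d)=n-2d$ give $K_d(1)=\tfrac{(n-2d)\binom nd}{n}<0$, the inequality $b_j\ge 0$ is precisely $K_d(j)\ge K_d(1)$. As $2d>n$ forces $d\ne n/2$, \cref{lemma: Krawtchouk bound 1 n-1}, read with the argument and the index in the opposite roles, gives $|K_d(j)|\le|K_d(1)|$ for $j\in[n-1]$, hence $K_d(j)\ge-|K_d(1)|=K_d(1)$ there, while for $j=n$ one has $K_d(n)=(-1)^d\binom nd=\binom nd>K_d(1)$ (\cref{lemma: Krawtchouk basic properties}). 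When $d$ is odd, the same reciprocity bookkeeping together with $\binom n{d+1}=\binom nd\tfrac{n-d}{d+1}$ collapses $b_j$ to $1+\tfrac{d+1}{(2d-n+1)\binom nd}\bigl(K_d(j;n)+K_{d+1}(j;n)\bigr)$, and \cref{lemma: Krawtchouk recurrence} at block length $n+1$ identifies $K_d(j;n)+K_{d+1}(j;n)=K_{d+1}(j;n+1)$. Since $d+1$ is even with $2(d+1)>n+1$ (so $d+1\ne\tfrac{n+1}2$), the even-case argument now runs verbatim at block length $n+1$: \cref{lemma: Krawtchouk bound 1 n-1} together with $K_{d+1}(1;n+1)=-\tfrac{(2d-n+1)\binom nd}{d+1}$ yields $K_{d+1}(j;n+1)\ge K_{d+1}(1;n+1)$ for all $j\in[n]$, which is exactly $b_j\ge 0$.

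Finally, for uniqueness I would feed any optimum $A$ into complementary slackness against the above $c^*$: whenever $A_i>0$ with $i\in[d,n]$ we get $\sum_j c^*_jK_j(i)=0$, and by the slackness computations above this forces the support of $A$ into $\{0,d\}$ in the even case and into $\{0,d,d+1\}$ in the odd case. Since $c^*_1>0$ (and $c^*_n>0$ in the odd case), the equalities $\sum_i A_iK_1(i)=0$ (and $\sum_i A_iK_n(i)=0$) become one, respectively two, linear equations in the remaining entries of $A$, whose unique solution is exactly the displayed $A^*$. I expect the feasibility step to be the main obstacle: $A^*$ is not in general the distance distribution of any code, so the $n$ Delsarte inequalities have to be verified by hand, and the crux is recognizing that — after reciprocity, and in the odd case after the block-length-$(n+1)$ identity of \cref{lemma: Krawtchouk recurrence} — each of them coincides with the extremal estimate $|K_j(i)|\le|K_j(1)|$ of \cref{lemma: Krawtchouk bound 1 n-1}, applied with the argument and the index swapped relative to its statement.
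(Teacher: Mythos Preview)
Your proposal is correct and follows essentially the same strategy as the paper: verify primal feasibility of $A^*$ via reciprocity and \cref{lemma: Krawtchouk bound 1 n-1} (lifting to block length $n+1$ through \cref{lemma: Krawtchouk recurrence} in the odd case), exhibit an explicit dual certificate with matching objective, and deduce uniqueness from complementary slackness. The odd-$d$ argument is identical to the paper's.

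The one genuine difference is your even-$d$ dual certificate: you take $c^*=(1,\tfrac1{2d-n},0,\dots,0)$, whereas the paper places positive mass at both $j=1$ and $j=n-1$. Your choice is the cleaner one. With the paper's dual the constraint $\sum_j c^*_jK_j(i)\le 0$ is tight at $i=d$ \emph{and} $i=d+1$, so complementary slackness only forces $A_i=0$ for $i>d+1$, leaving two unknowns $A_d,A_{d+1}$ that must then be pinned down by the two tight Delsarte equalities at $j=1$ and $j=n-1$. With your dual the constraint is tight only at $i=d$, so the support of any optimum is immediately confined to $\{0,d\}$ and the single equation from $j=1$ determines $A_d$. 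Both routes work; yours saves a step.
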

\begin{proof}
We first address the even $d$ case.
The objective value of $A^*$ is $\frac{2d}{2d-n}$, and all of the constraints trivially hold except for the Delsarte inequalities, which become
\[ \sum_{i=0}^n A_i^* K_j(i) = \binom{n}{j}+\frac{n}{2d-n}K_j(d) \geq 0 \]
for all $j \in [n]$, or equivalently
\[ K_j(d) \geq \frac{n-2d}{n}\binom{n}{j}. \]
Applying reciprocity of the Krawtchouk polynomials yields
\[ K_d(j) \geq \frac{n-2d}{n}\binom{n}{d}. \]
As $2d>n$, \cref{lemma: Krawtchouk bound 1 n-1} implies this inequality for all $j \in [n-1]$.
For $j=n$,
\[ K_d(n)=(-1)^d K_d(0) = \binom{n}{d} > 0 > \frac{n-2d}{n}\binom{n}{d},\]
which completes the proof that $A^*$ is feasible.

Consider the dual solution $c^*$ given by
\[ c^*_j = \begin{cases} 1 & \text{if } j = 0 \\ \frac{2d-n+1}{(2d-n)(2d-n+2)} & \text{if } j = 1 \\ \frac{1}{(2d-n)(2d-n+2)} & \text{if } j = n-1 \\ 0 & \text{otherwise}. \end{cases} \]
It has dual objective of $\frac{2d}{2d-n}$, so demonstrating it is a feasible solution proves $A^*$ is optimal.
As $2d>n$, we find that $c_j^* \geq 0$ for all $j \in [n]$, so it remains to show that for all $i \in [d,n]$,
\[ \sum_{j=0}^n c_j^* K_j(i) \leq 0. \]
Note that $K_0(i)=1$, $K_1(i)=n-2i$, and $K_{n-1}(i)=(-1)^i(n-2i)$, yielding
\[ \sum_{j=0}^n c_j^* K_j(i) = 1 + (n-2i)\frac{2d-n+1}{(2d-n)(2d-n+2)} + (-1)^i (n-2i)\frac{1}{(2d-n)(2d-n+2)}. \]
When $i$ is even, as $i \geq d$ this becomes
\[ 1 + \frac{n-2i}{2d-n} \leq 1+\frac{n-2d}{2d-n} = 0, \]
and when $i$ is odd, we have $i \geq d+1$ as $d$ is even, so the RHS becomes
\[ 1+\frac{n-2i}{2d-n+2} \leq 1+\frac{n-2d-2}{2d+2-n} = 0, \]
as desired.
Notice that these constraints are tight if and only if $i=d$ or $d+1$.

Hence, $A^*$ and $c^*$ are optimal solutions to the primal and dual, respectively.
By complementary slackness, as the dual constraint is strict for $i > d+1$, any optimum to the primal must satisfy $A_i = 0$ for all $i >d+1$.
Hence, $A_0=1$, $A_d$, and $A_{d+1}$ are the only potentially nonzero values.
As $c_1^*,c_{n-1}^* > 0$ then the Delsarte inequality must be tight for $j=1$ and $n-1$ for any primal optimum.
These conditions become
\begin{align*}
    n + (n-2d) A_d + (n-2d-2) A_{d+1} &= 0 \\
    n + (n-2d) A_d - (n-2d-2) A_{d+1} &= 0,
\end{align*}
which requires $A_{d+1}=0$ and $A_d=\frac{n}{2d-n}$, proving that $A^*$ is the unique optimum.

We now address the odd $d$ case, where $2d\geq n$.
The objective value of $A^*$ is $\frac{2d+2}{2d-n+1}$, and all of the constraints trivially hold except for the Delsarte inequalities, which become
\[ \sum_{i=0}^n A_i^* K_j(i) = \binom{n}{j}+\frac{d+1}{2d-n+1}K_j(d)+\frac{n-d}{2d-n+1}K_j(d+1) \geq 0 \]
for all $j \in [n]$.
Multiplying by $\frac{n!}{(d+1)!(n-d)!}$ yields the equivalent inequality
\[ \binom{n}{d} \frac{K_j(d)}{2d-n+1} + \binom{n}{d+1}\frac{K_j(d+1)}{2d-n+1} \geq -\frac{n!}{(d+1)!(n-d)!}\binom{n}{j}, \]
and after applying reciprocity of Krawtchouk polynomials, this becomes
\[ \frac{K_d(j)}{2d-n+1}+\frac{K_{d+1}(j)}{2d-n+1} \geq -\frac{n!}{(d+1)!(n-d)!}. \]
By \cref{lemma: Krawtchouk recurrence}, this is equivalent to
\[ K_{d+1}(j;n+1) \geq -\frac{n!(2d-n+1)}{(d+1)!(n-d)!} = \binom{n+1}{d+1}\frac{(n+1)-2(d+1)}{n+1}.\]
As $d+1>\frac{n+1}{2}$, \cref{lemma: Krawtchouk bound 1 n-1} implies this inequality holds for all $j\in[n]$.
Hence, $A^*$ is a feasible solution.

Consider the dual solution $c^*$ given by
\[ c_j^* = \begin{cases} 1 & \text{if } j = 0 \\ \frac{1}{2d-n+1} & \text{if } j\in\{1,n\} \\ 0 & \text{otherwise}. \end{cases} \]
It has dual objective of $\frac{2d+2}{2d-n+1}$, so showing $c^*$ is a feasible solution proves $A^*$ is optimal.
The assumption that $2d \geq n$ ensures $c_j^* \geq 0$ for all $j$, and so it remains to show that for all $i \in [d,n]$,
\[ \sum_{j=0}^n c_j^* K_j(i) \leq 0. \]
We have
\[ \sum_{j=0}^n c_j^* K_j(i) = 1 + \frac{n-2i}{2d-n+1} + (-1)^i \frac{1}{2d-n+1}. \]
When $i$ is even, $i \geq d+1$ so this becomes
\[ 1 + \frac{n+1-2i}{2d-n+1} \leq 1+\frac{n-2d-1}{2d-n+1} = 0, \]
and when $i$ is odd, as $i \geq d$ we find
\[ 1 + \frac{n-1-2i}{2d-n+1} \leq 1+\frac{n-2d-1}{2d-n+1} = 0, \]
as desired.
These constraints are tight if and only if $i=d$ or $d+1$.

Thus, $A^*$ and $c^*$ are optimal solutions to the primal and dual, respectively.
By complementary slackness, any primal optimum must additionally have $A_i=0$ for all $i>d+1$.
As $c_1^*,c_n^*>0$ then the Delsarte inequality must be tight for $j=1$ and $n$ for any primal optimum, i.e.,
\begin{align*}
    n + (n-2d) A_d + (n-2d-2) A_{d+1} &= 0 \\
    1 - A_d + A_{d+1} &= 0.
\end{align*}
Substituting the latter into the former yields
\begin{align*}
    n + (n-2d)(A_{d+1}+1)+(n-2d-2)A_{d+1} = 2n-2d+(2n-4d-2)A_{d+1} = 0,
\end{align*}
whose solution is $A_{d+1}=\frac{n-d}{2d-n+1}$.
Requiring $A_d=A_{d+1}+1$ yields $A^*$ as the unique optimum.
\end{proof}

\begin{example}
For an example of a code realizing the optimum from \cref{theorem: unique optimum upper half}, the binary simplex code \cite[Ch. 1, \S 9]{macwilliams1977theory}, which is the dual of a Hamming code, has $n=2^r-1$ and $d=2^{r-1}$, where the code is supported at exactly this one distance, i.e., $S=\{d\}$.
It has size $2^r$, and thus has distance distribution given by $A_0=1$, $A_d=2^r-1$, and $A_i=0$ everywhere else.
This matches the even case of \cref{theorem: unique optimum upper half}, and puncturing this code, i.e., removing a bit from the code, corresponds to the odd case of \cref{theorem: unique optimum upper half}.
\end{example}

\begin{remark}\label{remark:plotkin}
The case $2d>n$ is known as the Plotkin range.
For codes, which must have integer size, the Plotkin bound (see \cite[p.\ 43]{macwilliams1977theory}) upper bounds the size of codes by $2\left\lfloor\frac{d}{2d-n}\right\rfloor$ when $d$ is even, and $2\left\lfloor\frac{d+1}{2d-n+1}\right\rfloor$ when $d$ is odd.
These are the same bounds as for quasicodes from \cref{theorem: unique optimum upper half}, except rounding down to the nearest even integer.
Provided enough Hadamard matrices exist, the Plotkin bound is tight (see \cite[p.\ 50]{macwilliams1977theory}).

Delsarte \cite[Theorem~14]{delsarte1972bounds} provided an upper bound of $\frac{2d}{2d-n}$ on the optimal objective value of the Delsarte linear program in the Plotkin range.
\cref{theorem: unique optimum upper half} proves this bound is tight when $d$ is even, and improves the bound to $\frac{2d+2}{2d-n+1}<\frac{2d}{2d-n}$ when $d$ is odd, which is tight for the optimal objective value of the Delsarte linear program.
\end{remark}

\subsection{Non-uniqueness of dual optimum}\label{subsection: non-unique dual}

Studying the cases $d=n$ and $n-1$, we find that the dual of the Delsarte linear program does not generally have a unique optimum, i.e., a unique feasible point achieving the optimal value, for all pairs $(n,d)$.

\begin{proposition}\label{proposition: non-unique optimal dual d=n}
When $d=n$, the dual of the Delsarte linear program has a unique optimum if and only if $n \leq 2$.
\end{proposition}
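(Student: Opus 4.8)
When $d=n$, the only free variables are $A_n$ (since $A_i=0$ for $i\in[n-1]$), so the primal is one-dimensional and, by \cref{theorem: unique optimum upper half} applied with $d=n>n/2$, has a unique optimum $A^*$ with $A^*_0=1$, $A^*_n=\frac{n}{2n-n}=1$ (when $n$ even) and objective $2$, or the analogous odd-$d$ optimum. The plan is to analyze the dual feasible region directly: the dual asks for $c=(c_0,\dots,c_n)$ with $c_0=1$, $c_j\ge 0$, $\sum_j c_j K_j(n)\le 0$ (the only constraint since $i$ ranges over $[d,n]=\{n\}$), minimizing $\sum_j c_j\binom{n}{j}$. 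Using \cref{lemma: Krawtchouk basic properties}(3), $K_j(n)=(-1)^n K_{n-j}(0) = (-1)^n\binom{n}{n-j}=(-1)^n\binom{n}{j}$, so for $n$ even the lone dual constraint is $\sum_j c_j\binom{n}{j}\le 0$, which combined with $c_j\ge0$, $c_0=1$ forces $\binom{n}{0}\le\sum c_j\binom nj\le 0$, a contradiction — wait, that cannot be right, so I should recheck: in fact $K_j(n)=(-1)^n K_j(0)$ by \cref{lemma: Krawtchouk basic properties}(4) gives $K_j(n)=(-1)^j K_j(0)=(-1)^j\binom nj$. So the dual constraint reads $\sum_{j=0}^n(-1)^j c_j\binom nj\le 0$, and the objective is $\sum_j c_j\binom nj$; both use the weights $\binom nj$ but the constraint alternates in sign.

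**The main computation.** With this corrected constraint, I would write $c_j\binom nj = x_j\ge 0$ with $x_0=1$; the dual becomes: minimize $\sum_{j=0}^n x_j$ subject to $\sum_{j=0}^n(-1)^j x_j\le 0$ and $x_j\ge 0$, $x_0=1$. The constraint says $\sum_{j\text{ odd}}x_j\ge 1+\sum_{j\ge 2\text{ even}}x_j$, so $\sum_j x_j = 1 + \sum_{j\ge 2\text{ even}}x_j + \sum_{j\text{ odd}}x_j\ge 1+2\sum_{j\ge 2\text{ even}}x_j+\cdots$; the minimum $\sum x_j=2$ is achieved exactly when all even $x_j$ with $j\ge 2$ vanish and $\sum_{j\text{ odd}}x_j=1$. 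For the optimum to be \emph{unique} among the $x$'s (equivalently the $c$'s), there must be only one odd index $j\in[1,n]$; that happens iff $n\le 2$ (odd indices available are $j=1$ only when $n\in\{1,2\}$; when $n\ge 3$ both $j=1,3$ are available and one can split the mass $1$ between them, giving infinitely many optima). The odd-$d$ (i.e. odd $n$) case is handled the same way after the sign bookkeeping, and one checks the boundary $n\le 2$ again gives uniqueness while $n\ge 3$ does not. I would also double-check $n=1,2$ explicitly: for $n=1$ the dual has variables $c_0=1,c_1\ge0$ with constraint $c_0-c_1\le0$, i.e. $c_1\ge1$, minimized at $c_1=1$, unique; for $n=2$, $c_0=1$, constraint $1-2c_1+c_2\cdot?$—here $K_2(2)=(-1)^2K_2(0)=\binom22=1$ and $K_1(2)=(-1)^1\binom21=-2$, so constraint is $1-2c_1+c_2\le0$ and objective $1+2c_1+c_2$, whose minimum $2$ forces $c_2=0,c_1=1/2$, unique.

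**Structure of the proof and the main obstacle.** So the proof has three parts: (i) reduce the dual to the transparent LP in the variables $x_j=c_j\binom nj$ using the symmetry identities from \cref{lemma: Krawtchouk basic properties}; (ii) show the optimal value is $2$ (matching \cref{theorem: unique optimum upper half}) and that the optimal face consists exactly of those $x$ supported on $\{0\}\cup(\text{odd indices})$ with total odd-mass $1$; (iii) observe this face is a single point iff there is exactly one odd index in $[1,n]$, i.e. iff $n\le 2$, and conversely exhibit a one-parameter family of optima (shifting mass between $c_1$ and $c_3$, rescaled) when $n\ge 3$. The main obstacle is purely bookkeeping: getting the signs in $K_j(n)$ right and confirming that the $d=n$ dual really does collapse to this one-inequality LP — once that is pinned down, the combinatorics of "how many odd indices are there" is immediate. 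A secondary subtlety is making sure the claimed non-unique optima are genuinely \emph{feasible} (nonnegativity of all $c_j$), which holds since we only ever place nonnegative mass on $c_0,c_1,c_3$.
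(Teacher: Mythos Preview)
Your proposal is correct and follows essentially the same approach as the paper: both reduce the dual at $d=n$ to the single inequality $\sum_j(-1)^jc_j\binom nj\le 0$, show the optimal face consists of those $c$ with $c_j=0$ for positive even $j$ and $\sum_{j\text{ odd}}c_j\binom nj=1$, and conclude by counting odd indices in $[1,n]$. The only cosmetic difference is that the paper first invokes complementary slackness (with $A^*=(1,0,\dots,0,1)$) to turn the inequality into an equality, whereas you reach the same characterization by a direct LP bound after the substitution $x_j=c_j\binom nj$; your remark about a separate ``odd-$d$ case'' is unnecessary since the analysis is uniform in $n$.
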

\begin{proof}
By complementary slackness with primal optimum $A^*=(1,0,0,\dots,0,1)$ from \cref{theorem: unique optimum upper half}, a dual optimum must have
\begin{equation}\label{eq: dual d=n complementary slackness}
    \sum_{j=0}^n c_j K_j(n) = 1+\sum_{j=1}^n c_j (-1)^j \binom{n}{j} = 0,
\end{equation}
using properties from \cref{lemma: Krawtchouk basic properties}.
As $c_j \geq 0$ for all $j$, clearly $c_j=0$ for any positive even $j$, as otherwise we can decrease it along with some $c_j'>0$ for odd $j'$ to decrease the objective value while preserving the necessary equation, \cref{eq: dual d=n complementary slackness}.
Thus, any dual optimum must have
\begin{equation}\label{eq: dual d=n necessary and sufficient}
    \sum_{\substack{1 \leq j \leq n \\ j \text{ odd}}} c_j \binom{n}{j} = 1,
\end{equation}
which implies the objective value is 2, which is optimal, so this condition, along with $c_j=0$ for all positive even $j$, is necessary and sufficient for a dual optimum.

For $n \leq 2$ this clearly yields a unique dual optimum, but for $n \geq 3$, we find \cref{eq: dual d=n necessary and sufficient} is a equation over multiple variables, yielding multiple optimal solutions.
\end{proof}

\begin{theorem}\label{theorem: non-unique odd optimal dual d=n-1}
When $d=n-1$, the dual of the Delsarte linear program has a unique optimum if and only if $n$ is even, in which case the unique dual optimum is $c^*=\left(1,\frac{1}{n-1},0,0,\dots,0,\frac{1}{n-1}\right)$.
\end{theorem}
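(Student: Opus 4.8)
The plan is to mirror the structure of Proposition~\ref{proposition: non-unique optimal dual d=n}: first pin down a primal optimum via Theorem~\ref{theorem: unique optimum upper half}, then use complementary slackness to extract the linear constraints that every dual optimum must satisfy, and finally count degrees of freedom. When $d=n-1$ we have $2d=2n-2\geq n$ exactly when $n\geq 2$, so Theorem~\ref{theorem: unique optimum upper half} applies throughout. The parity of $n$ matters because $d=n-1$ flips parity with $n$: if $n$ is even then $d$ is odd, and if $n$ is odd then $d$ is even, so the two cases of Theorem~\ref{theorem: unique optimum upper half} get swapped relative to the parity of $n$.

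First I would write down the primal optimum $A^*$ explicitly in each case. For $n$ even (so $d=n-1$ odd), Theorem~\ref{theorem: unique optimum upper half} gives $A^*_0=1$, $A^*_{d}=\frac{d+1}{2d-n+1}=\frac{n}{n-1}$, $A^*_{d+1}=A^*_n=\frac{n-d}{2d-n+1}=\frac{1}{n-1}$, and $0$ elsewhere. For $n$ odd (so $d=n-1$ even), we get $A^*_0=1$, $A^*_d=\frac{n}{2d-n}=\frac{n}{n-2}$, and $0$ elsewhere. In both cases complementary slackness says that for any dual optimum $c$, the dual constraint $\sum_{j=0}^n c_j K_j(i)\le 0$ must hold with equality at every $i$ with $A^*_i>0$. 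I would then, as in the $d=n$ proof, argue that nonnegativity of the $c_j$ together with these equality constraints forces many $c_j$ to vanish.

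For $n$ even: the active primal indices are $i=0,n-1,n$. Using $K_j(0)=\binom nj$, $K_j(n-1)=(-1)^{j}K_j(1)=(-1)^j\binom nj\frac{n-2j}{n}$ (by Lemma~\ref{lemma: Krawtchouk basic properties}(4) and $K_j(1)=\binom nj\frac{n-2j}{n}$), and $K_j(n)=(-1)^j\binom nj$ (by Lemma~\ref{lemma: Krawtchouk basic properties}(4) with $i=0$, or (3)), the three equality conditions at $i=0,n-1,n$ become three linear equations in $(c_1,\dots,c_n)$ with $c_0=1$. Adding/subtracting the $i=n-1$ and $i=n$ equations isolates the even-$j$ and odd-$j$ parts; combined with nonnegativity, I expect this to force $c_j=0$ for all $j\notin\{0,1,n\}$ and then to determine $c_1,c_n$ uniquely, giving $c^*=(1,\frac1{n-1},0,\dots,0,\frac1{n-1})$ — which we already know is dual-feasible with objective $\frac{2d+2}{2d-n+1}=\frac{2n}{n-1}$, matching the primal value $1+\frac{n}{n-1}+\frac1{n-1}=\frac{2n}{n-1}$. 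This establishes uniqueness in the even case. For $n$ odd: only $i=0$ and $i=d=n-1$ are active, so there are just two equality constraints, hence a one-parameter (or larger) family of nonnegative dual optima, exactly as in the $d=n$ argument with $n\ge 3$; I would exhibit this freedom explicitly (e.g., perturbing within $\{c_2,\dots\}$ or between $c_1$ and $c_n$ subject to the single active equation) to conclude non-uniqueness.

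The main obstacle is the even-$n$ uniqueness direction: I need the system of three complementary-slackness equations, together with the sign constraints $c_j\ge 0$, to genuinely force all but $c_0,c_1,c_n$ to be zero. The argument for killing the other variables is the same "decrease a bad coordinate against a good one of matching parity" trick used in Proposition~\ref{proposition: non-unique optimal dual d=n}, but here I must track two parity classes simultaneously and make sure the $i=n-1$ equation (with its alternating-sign, weighted coefficients $\frac{n-2j}{n}$) doesn't secretly allow a nonzero interior $c_j$; the coefficient $\frac{n-2j}{n}$ vanishes only at $j=n/2$, so I should check that $c_{n/2}$ is still forced to zero by the $i=0$ and $i=n$ equations. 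Once the support is reduced to $\{0,1,n\}$, solving the remaining $2\times 2$ system is routine and yields the claimed $c^*$.
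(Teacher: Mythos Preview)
Your overall strategy---fix the primal optimum $A^*$ from Theorem~\ref{theorem: unique optimum upper half} and read off constraints on any dual optimum via complementary slackness---is exactly what the paper does, but you are invoking only \emph{one} of the two complementary-slackness directions, and that is precisely the obstacle you identify at the end. You use ``$A^*_i>0\Rightarrow$ the dual inequality at $i$ is tight,'' which for even $n$ gives only the two equations at $i=n-1$ and $i=n$ (your ``$i=0$'' is not a dual constraint: the dual inequalities are indexed by $i\in[d,n]$, so at best you mean the objective-value equation). That is three linear conditions on $n$ unknowns, and your plan to kill the remaining $c_j$ by the ``decrease a bad coordinate against a good one'' move from Proposition~\ref{proposition: non-unique optimal dual d=n} does not transfer: there you had a \emph{single} tight equation to preserve, so a two-coordinate perturbation sufficed and always strictly lowered the objective. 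Here two equations must be preserved, so any perturbation touches at least three coordinates; working it out, lowering $c_j$ for $j\in\{2,\dots,n-1\}$ while keeping both equations forces $\delta_1<0$ when $j$ is even and $\delta_n<0$ when $j$ is odd, so the move is infeasible unless you already know $c_1>0$ (resp.\ $c_n>0$)---which is what you are trying to prove. Your ``adding/subtracting the $i=n-1$ and $i=n$ equations isolates the parities'' is also off: both equations already carry the factor $(-1)^j$, so their sum and difference do not separate even from odd $j$.

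The paper short-circuits all of this by using the \emph{other} complementary-slackness direction: for each $j\in[n]$, if the primal Delsarte inequality $\sum_i A^*_iK_j(i)\ge 0$ is strict at $A^*$, then $c_j=0$ for every dual optimum. The feasibility computation inside the proof of Theorem~\ref{theorem: unique optimum upper half} shows this inequality is strict for every $j$ except $j=1$ and $j=2\lfloor n/2\rfloor$. That single observation immediately confines any dual optimum to the coordinates $\{0,1,n\}$ when $n$ is even (resp.\ $\{0,1,n-1\}$ when $n$ is odd), after which the tight-dual-constraint equations at $i=n-1,n$ (even case) form a $2\times 2$ system with the unique solution $c_1=c_n=\tfrac{1}{n-1}$, while in the odd case the single equation at $i=n-1$ leaves a visible one-parameter family. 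So the fix to your argument is short: invoke the strict-primal-inequality side of complementary slackness first to force $c_j=0$ for $2\le j\le n-1$ (even $n$) or for $j\notin\{1,n-1\}$ (odd $n$), instead of trying to recover that conclusion from the tight dual equations and sign conditions.
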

\begin{proof}
As $d=n-1 \geq 1$, we have $n \geq 2$.
From the proof of \cref{theorem: unique optimum upper half}, we find that, with respect to the unique optimum $A^*$, the constraint $\sum_{i=0}^n A^*_i K_j(i) \geq 0$ is strict for all $j \in [n]$ except $j=1$ and $j=2\floor{n/2}$.
By complementary slackness, any optimal dual solution $c$ can thus only have positive entries $c_0=1$, $c_1$, and $c_{2\floor{n/2}}$.
Complementary slackness when $A^*_i > 0$ also yields $\sum_{j=0}^n c_j K_j(i) = 0$ for $i\in\{n-1,n\}$ if $n$ is even, and for $i=n-1$ if $n$ is odd.

For even $n \geq 2$, implementing these observations from complementary slackness allows the dual to be rewritten as
\begin{align*}
    \min \quad & 1 + nc_1 + c_n \\
    \text{such that} \quad & 1 - (n-2) c_1 - c_n = 0 \\
    & 1 - n c_1 + c_n = 0 \\
    & c_1,c_n \geq 0.
\end{align*}
The first two constraints have a unique solution $c_1=c_n=\frac{1}{n-1}$, meaning the unique dual optimum is $c^*=\left(1,\frac{1}{n-1},0,0,\dots,0,\frac{1}{n-1}\right)$.

For odd $n \geq 3$, we rewrite the dual as
\begin{align*}
    \min \quad & 1 + nc_1 + n c_{n-1} \\
    \text{such that} \quad & 1 - (n-2) c_1 - (n-2) c_{n-1} = 0 \\
    & 1 - nc_1 + nc_{n-1} \leq 0 \\
    & c_1, c_{n-1} \geq 0.
\end{align*}
The first constraint yields $c_1 + c_{n-1} = \frac{1}{n-2}$, which fixes the objective value to be $1+\frac{n}{n-2}$, which is the optimal value by \cref{theorem: unique optimum upper half}.
So any feasible solution is a dual optimum, and namely substituting $c_{n-1}=\frac{1}{n-2}-c_1$, the second constraint becomes $c_1 \geq \frac{n-1}{n(n-2)}$.
Combining this with the nonnegativity constraint $\frac{1}{n-2}-c_1=c_{n-1} \geq 0$ yields
\[ \frac{n-1}{n}\cdot\frac{1}{n-2} \leq c_1 \leq \frac{1}{n-2}, \]
so there are multiple values for $c_1$, all of which yield a nonnegative value for $c_{n-1}$.
Each of these yields a feasible dual solution, which must then be a dual optimum, so the dual does not have a unique optimum when $n$ is odd.
\end{proof}

\subsection{The case $d=1$}

When $d=1$, we see that both the primal and dual have unique optima.

\begin{theorem}\label{theorem: unique optimal quasicode and dual d=1}
When $d=1$, the Delsarte linear program has a unique optimum, namely the quasicode $A^*$ given by $A^*_i = \binom{n}{i}$ for all $0 \leq i \leq n$.
In addition, the dual linear program has a unique optimum, namely the dual solution $c^*=\1$.
\end{theorem}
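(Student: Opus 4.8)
The plan is to exhibit both claimed optima explicitly, verify feasibility from the two classical identities (orthogonality and the column-sum formula), match objective values to conclude optimality by weak duality, and then read off uniqueness from complementary slackness together with the nonsingularity of $K$. First I would check that $A^*$ with $A^*_i=\binom ni$ is primal feasible: since $d=1$ the constraints ``$A_i=0$ for $i\in[d-1]$'' are vacuous, $A^*_0=\binom n0=1$, and $A^*_i\ge0$ for all $i$. Writing $\binom ni=\binom ni K_0(i)$ and applying orthogonality of the Krawtchouk polynomials (\cref{lemma: Krawtchouk orthogonality}),
\[
\sum_{i=0}^n A^*_iK_j(i)=\sum_{i=0}^n\binom ni K_0(i)K_j(i)=2^n\binom n0\delta_{0j}=2^n\delta_{0j}\ge0
\]
for all $j\in[n]$, so $A^*$ is feasible with objective $\sum_{i=0}^n\binom ni=2^n$. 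For the dual, $c^*=\1$ has $c^*_0=1$ and $c^*_j=1\ge0$, and by the column-sum identity (\cref{lemma: Krawtchouk column-sum zero}), $\sum_{j=0}^n c^*_jK_j(i)=\sum_{j=0}^n K_j(i)=0\le0$ for every $i\in[1,n]$; its objective is $\sum_{j=0}^n\binom nj=2^n$. Since the primal and dual objectives agree, weak duality makes both $A^*$ and $c^*$ optimal.

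For uniqueness of the primal optimum I would invoke complementary slackness against $c^*$: since $c^*_j=1>0$ for every $j\in[n]$, any primal optimum $A$ must make every Delsarte inequality tight, i.e.\ $\sum_{i=0}^n A_iK_j(i)=0$ for all $j\in[n]$, while $\sum_{i=0}^n A_iK_0(i)=\sum_i A_i=2^n$ at optimality (using $K_0\equiv1$). Thus $\sum_{i=0}^n A_iK_j(i)=2^n\delta_{0j}$ for all $j\in[0,n]$, and by the feasibility computation $A^*$ satisfies the same system; since $K$ is nonsingular ($K^2=2^nI$ by \cref{lemma: Krawtchouk orthogonality}), this forces $A=A^*$. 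Equivalently, in terms of the Krawtchouk decomposition: complementary slackness gives $b_j=0$ for $j\ge1$, the inversion formula collapses to $A_i=\binom ni b_0/2^n$, and $A_0=1$ pins $b_0=2^n$, giving $A_i=\binom ni$.

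For uniqueness of the dual optimum I would run complementary slackness in the other direction: since $A^*_i=\binom ni>0$ for every $i\in[1,n]$, any dual optimum $c$ must satisfy $\sum_{j=0}^n c_jK_j(i)=0$ for all $i\in[1,n]$. Hence the vector $\sum_{j=0}^n c_jK_j\in\R^{n+1}$ is supported on its $0$-th coordinate, so it is a scalar multiple of $\sum_{j=0}^n K_j=(2^n,0,\dots,0)$ (\cref{lemma: Krawtchouk column-sum zero}); subtracting that multiple and using the linear independence of $K_0,\dots,K_n$ (a consequence of $K^2=2^nI$) forces all the $c_j$ to be equal, and $c_0=1$ then gives $c=\1$.

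I do not expect a genuine obstacle here: the distinctive feature of $d=1$ is that the full cube $\F_2^n$ is a code attaining the bound, so the feasible polytope is large and the duality-plus-complementary-slackness argument is soft. The only points that require care are invoking orthogonality for the primal constraints but the column-sum lemma for the dual constraints, and observing that complementary slackness upgrades the primal Delsarte inequalities (for $j\ge1$) and the dual inequalities (for $i\ge1$) to equalities — which is precisely what collapses each optimum to a single point via the invertibility of $K$.
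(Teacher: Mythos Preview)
Your proof is correct and follows the same overall strategy as the paper: verify feasibility of $A^*$ via orthogonality and of $c^*$ via the column-sum identity, match objective values, then use complementary slackness in both directions together with a linear-independence argument to pin down uniqueness. The one genuine difference is in how the linear algebra is organized. The paper first proves, by a separate orthogonality computation, that the truncated vectors $K_j'=(K_j(1),\dots,K_j(n))$ for $j\in[n]$ are linearly independent, and then uses that $n\times n$ independence to conclude uniqueness of both the primal (with $A_0=1$ fixed) and the dual (with $c_0=1$ fixed). You instead work with the full $(n+1)\times(n+1)$ matrix $K$, whose invertibility is immediate from $K^2=2^nI$: on the primal side you adjoin the equation $\sum_i A_iK_0(i)=2^n$ coming from the optimal objective value, and on the dual side you observe that $\sum_j c_jK_j$ is a scalar multiple of $\sum_j K_j$ and invoke linear independence of the rows $K_0,\dots,K_n$. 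Your route is a bit slicker since it avoids the separate lemma about the $K_j'$; the paper's route has the minor advantage that the truncated independence is reusable (it reappears in the $d=2$ analysis).
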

The fact that $A_i^*=\binom{n}{i}$ is optimal is well-known; see Levenshtein \cite{levenshtein1992designs}.
For completeness, we prove that it is not only optimal, but also unique, and additionally address the dual linear program.
\begin{proof}
Define vectors $K_j'=(K_j(1),\dots,K_j(n))$ for all $1 \leq j \leq n$.
We first show that these vectors are linearly independent.
Suppose we have $b_1,\dots,b_n$ such that $b_1K_1'+\cdots+b_nK_n' = \0$.
Define vectors $K_j=(K_j(0),\dots,K_j(n))$ for all $1 \leq j \leq n$, and let $v$ be the vector
\[ v=b_1K_1+\cdots+b_nK_n=\left(b_1\binom{n}{1}+\cdots+b_n\binom{n}{n},0,\dots,0\right). \]
By the orthogonality of the Krawtchouk polynomials, for all $1 \leq j \leq n$,
\[ \binom{n}{j}\left(b_1\binom{n}{1}+\cdots+b_n\binom{n}{n}\right) = \sum_{i=0}^n \binom{n}{i} v_i K_j(i) = 2^n \binom{n}{j} b_j, \]
so
\[ b_1\binom{n}{1}+\cdots+b_n\binom{n}{n}=2^n b_j \]
for all $j$.
Hence $b_1=\cdots=b_n$, and the necessary equation is
\[ b_1 \left(\binom{n}{1}+\cdots+\binom{n}{n}\right) = b_1\left(2^n-1\right) = 2^n b_1,\]
which yields $b_1=\cdots=b_n=0$, implying the vectors $K_j'$ for $1 \leq j \leq n$ are linearly independent.

We first show that $A^*$ is the unique optimum of the primal linear program.
The linear independence of the vectors $K_j'$ implies that there is at most one vertex $A$ of $P$ that has $A_i > 0$ for all $i \in [n]$, namely the unique solution to $\sum_{i=0}^n A_i K_j(i) = 0$ for all $j \in [n]$ where $A_0=1$.
This point is $A^*$, as $\sum_{i=0}^n \binom{n}{i} K_j(i)=0$ for all $j\in[n]$ via orthogonality of the Krawtchouk polynomials, using $K_0(i)=1$ as the other polynomial.
Optimality of $A^*$ can be shown by finding a dual solution that achieves the same objective value, namely $2^n$.
The dual solution $c^*=\1$ is such a solution, as it yields an objective value of $2^n$ and is feasible because for any $i \in [n]$,
\[ \sum_{j=0}^n K_j(i) = 0 \]
by \cref{lemma: Krawtchouk column-sum zero}.

By complementary slackness with this optimal dual solution $c^*=\1$, this means the Delsarte inequalities $\sum_{i=0}^n A_i K_j(i) \geq 0$ must be sharp for all $j \in [n]$.
We know that there is only one solution to this system of equations, which is $A^*$, so $A^*$ is the unique optimal quasicode.

To show that $c^*=\1$ is the unique optimum of the dual linear program, by complementary slackness with optimal primal solution $A^*$ where $A^*_i=\binom{n}{i}>0$ for all $i$, the dual inequalities $\sum_{j=0}^n c_j K_j(i) \leq 0$ for all $i \in [n]$ must be sharp.
Equivalently, we must have $\sum_{j=1}^n c_j K_j(i) = -1$ for all $i \in [n]$.
The linear independence of vectors $K_j'$ implies that the $n\times n$ matrix $K'$ given by $K'_{ij}=K_i(j)$ is non-singular, so its columns, which are the vectors $(K_1(i),\dots,K_n(i))$ for $1 \leq i \leq n$, are linearly independent.
Thus, the system of equations $\sum_{j=1}^n c_j K_j(i) = -1$ for all $i \in [n]$ has a unique solution, which we have already shown is given by $c_1=\cdots=c_n=1$.
Therefore, $c^*=\1$ is the unique optimum of the dual.
\end{proof}

\subsection{The case $d=2$}

In this section we will show that although the primal always has a unique optimum when $d=2$, the dual almost always does not.

\begin{theorem}\label{theorem: unique optimal quasicode d=2}
When $d=2$, the Delsarte linear program has a unique optimum, namely the quasicode $A^*$ given by $A^*_i = \begin{cases} \binom{n}{i} & i \text{ is even} \\ 0 & i \text{ is odd}\end{cases}$ for all $0 \leq i \leq n$.
\end{theorem}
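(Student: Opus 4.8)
The plan is to follow the template of the proof of \cref{theorem: unique optimal quasicode and dual d=1}, replacing its linear-independence input by the $d=2$ analogue: the vectors $\left(K_j(2),K_j(3),\dots,K_j(n)\right)$ for $j\in[n-1]$ are linearly independent in $\R^{n-1}$. To prove this, I would suppose $\sum_{j=1}^{n-1}b_jK_j(i)=0$ for every $i\in[2,n]$, set $b_0=b_n=0$, and form $v=\sum_{j=0}^n b_jK_j$ as a vector indexed by $i\in[0,n]$; then $v_i=0$ for $i\in[2,n]$, so $v$ is supported on coordinates $0$ and $1$. Feeding $v$ into the coefficient-recovery form of \cref{lemma: Krawtchouk orthogonality} and using $K_k(0)=\binom{n}{k}$ together with $K_k(1)=\frac{n-2k}{n}\binom{n}{k}$ (from \cref{lemma: Krawtchouk basic properties}) gives $v_0+(n-2k)v_1=2^nb_k$ for every $0\le k\le n$. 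Evaluating at $k=0$ and $k=n$, where $b_0=b_n=0$, yields $v_0+nv_1=0$ and $v_0-nv_1=0$, hence $v_0=v_1=0$, and then $b_k=0$ for all $k\in[n-1]$.

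Given this linear independence, the requirements $A_0=1$, $A_1=0$, and $\sum_{i=0}^nA_iK_j(i)=0$ for all $j\in[n-1]$ form an $(n-1)\times(n-1)$ linear system for $A_2,\dots,A_n$ with nonsingular coefficient matrix $\bigl(K_j(i)\bigr)_{j\in[n-1],\,i\in[2,n]}$, so \emph{at most one} feasible point of $P$ makes every Delsarte inequality with $j\in[n-1]$ tight. I would then verify that $A^*$ is exactly this point and is feasible: from $\sum_{i=0}^n\binom{n}{i}K_j(i)=2^n\delta_{j,0}$ (orthogonality against $K_0$) and, via property~$(3)$ of \cref{lemma: Krawtchouk basic properties}, $\sum_{i=0}^n(-1)^i\binom{n}{i}K_j(i)=\sum_{i=0}^n\binom{n}{i}K_{n-j}(i)=2^n\delta_{j,n}$, one obtains $\sum_{i\text{ even}}\binom{n}{i}K_j(i)=2^{n-1}(\delta_{j,0}+\delta_{j,n})$; hence the Delsarte inequalities hold for $A^*$ with equality for $j\in[n-1]$ and value $2^{n-1}\ge0$ for $j=n$, the remaining constraints are immediate, and the objective value is $\sum_{i\text{ even}}\binom{n}{i}=2^{n-1}$.

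For optimality I would use the dual solution $c^*_j=\frac{n-j}{n}$. The point is that $c^*_j=\frac12K_0(j)+\frac1{2n}K_1(j)$ (valid since $K_0(j)=1$ and $K_1(j)=n-2j$), so by $K^2=2^nI$, i.e.\ $\sum_{j=0}^nK_k(j)K_j(i)=2^n\delta_{ki}$, we get $\sum_{j=0}^nc^*_jK_j(i)=2^{n-1}\delta_{i,0}+\frac{2^{n-1}}{n}\delta_{i,1}$, which is $0$ for all $i\in[2,n]$; since also $c^*_0=1$ and $c^*_j\ge0$, $c^*$ is dual feasible, and its objective value is $\sum_{j=0}^nc^*_jK_j(0)=2^{n-1}$. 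This matches the objective value of $A^*$, so both are optimal. Finally, since $c^*_j>0$ for every $j\in[n-1]$, complementary slackness forces any primal optimum to make all Delsarte inequalities with $j\in[n-1]$ tight; by the linear-independence step the only such feasible point is $A^*$, so $A^*$ is the unique optimum.

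I expect the linear-independence lemma to be the main obstacle, though the orthogonality argument from the $d=1$ proof transfers with only cosmetic changes: the auxiliary vector $v$ is now supported on \emph{two} coordinates instead of one, which is exactly why the determining equations are read off at both $k=0$ and $k=n$. Everything else should reduce to short computations, the cleanest being the observation that $c^*$ lies in the span of the first two rows of the Krawtchouk matrix, which makes dual feasibility fall out of $K^2=2^nI$.
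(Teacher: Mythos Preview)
Your proposal is correct and follows essentially the same approach as the paper: the same dual certificate $c^*_j=\frac{n-j}{n}$, the same complementary slackness argument forcing tightness for $j\in[n-1]$, and the same linear-independence lemma for the truncated Krawtchouk rows $(K_j(2),\dots,K_j(n))$ proved via an orthogonality argument that pins down $v_0$ and $v_1$. The only notable difference is cosmetic: you verify dual feasibility by writing $c^*=\tfrac12K_0+\tfrac1{2n}K_1$ and invoking $K^2=2^nI$ directly, whereas the paper routes the same computation through $\sum_j jK_j(i)$ and reciprocity---your version is a bit cleaner, but the content is identical.
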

Similar to the case $d=1$, it is already known that the stated $A^*$ is optimal; see Levenshtein \cite{levenshtein1992designs}.
However, the uniqueness of this optimum, as well as the dual as addressed in \cref{theorem: non-unique optimal dual d=2}, have not been demonstrated previously, so for completeness we provide a full proof of the result.
\begin{proof}
Notice that the objective value for $A^*$ is $2^{n-1}$.
We claim that $c_j=\frac{n-j}{n}$ for all $0 \leq j \leq n$ is a dual solution with objective value $2^{n-1}$, which proves $A^*$ is optimal.

To show $c$ is a feasible solution to the dual, we will show that for all $i \in [2,n]$,
\[ \sum_{j=0}^n c_j K_j(i) = 0.\]
Notice that
\[ \sum_{j=0}^n c_j K_j(i) = \sum_{j=0}^n K_j(i) - \frac{1}{n}\sum_{j=0}^n j K_j(i) = -\frac{1}{n}\sum_{j=0}^n j K_j(i)\]
by \cref{lemma: Krawtchouk column-sum zero}, so it is equivalent to show
\[ \sum_{j=0}^n j K_j(i) = 0.\]
Directly from the definition of $K_j(i)$, we see that $K_0(i)=1$ and $K_1(i)=n-2i$, meaning
\[ i=\frac{n}{2}K_0(i)-\frac{1}{2}K_1(i).\]
Using \cref{lemma: Krawtchouk basic properties} and then \cref{lemma: Krawtchouk orthogonality}, we find
\[ \sum_{j=0}^n j K_j(i) = \frac{1}{\binom{n}{i}} \sum_{j=0}^n \binom{n}{j} j K_i(j) = \frac{1}{\binom{n}{i}} \sum_{j=0}^n \binom{n}{j} \left(\frac{n}{2}K_0(j)-\frac{1}{2}K_1(j)\right)K_i(j) = 0\]
for all $i \in [2,n]$, as desired.

The objective value of $c$ is
\[ \sum_{j=0}^n c_j \binom{n}{j} = \sum_{j=0}^n \frac{n-j}{n} \binom{n}{j} = 2^n - \frac{1}{n}\sum_{j=1}^n j\binom{n}{j} = 2^n \frac{1}{n}n\cdot2^{n-1} = 2^{n-1},\]
where the combinatorial identity
\[ \sum_{j=1}^n j\binom{n}{j} = n 2^{n-1} \]
can be easily seen by noticing both sides count the ways to choose a subset of a set of $n$ elements, where one element of the subset is distinguished: the left hand side first picks the subset of $j$ elements and then picks the distinguished element, while the right hand side picks the distinguished element and then considers whether each of the remaining $n-1$ elements are in the subset.

By complementary slackness, as $c_j > 0$ for all $j \in [n-1]$, any optimal quasicode $A$ must satisfy
\[ \sum_{i=0}^n A_i K_j(i) = 0\]
for all $j \in [n-1]$.
Note that $A^*$ satisfies these conditions, because expressing $A^*$ by
\[ A^*_i = \binom{n}{i}(K_0(i)+K_n(i))/2 \]
yields
\[ \sum_{i=0}^n A_i K_j(i) = \frac{1}{2} \sum_{i=0}^n \binom{n}{i} (K_0(i)+K_n(i))K_j(i) = 0\]
by \cref{lemma: Krawtchouk orthogonality} for all $j \in [n-1]$.
We have $n-1$ linear equations, one for each $j\in[n-1]$, that must be satisfied for any optimal quasicode $A$; given that $A_1=0$, we also have $n-1$ variables, $A_2,\dots,A_n$.
Let $K_j^{(2)}=\left(K_j(2),\dots,K_j(n)\right)$ for $j \in [n-1]$.
If the vectors $K_j^{(2)}$ for $j \in [n-1]$ are linearly independent, then this implies there is a unique solution to our system of $n-1$ linear equations, and thus $A^*$ is the unique optimal quasicode.

Suppose $b_1 K_1^{(2)}+\cdots+b_{n-1}K_{n-1}^{(2)} = \0$.
Let $v=b_1K_1+\cdots+b_{n-1} K_{n-1}$, using the same definition $K_j=(K_j(0),\dots,K_j(n))$ as in \cref{theorem: unique optimal quasicode and dual d=1}.
Using orthogonality of the Krawtchouk polynomials with $K_0(i)=1$, we find
\[ 0 = \sum_{i=0}^n \binom{n}{i} v_i = v_0 + n v_1,\]
and using $K_n(i)=(-1)^i$, we find
\[ 0 = \sum_{i=0}^n \binom{n}{i} v_i (-1)^i = v_0 - n v_1.\]
This implies $v_0=v_1=0$, so $v=\0$.
Hence, for all $j \in [n-1]$, we find
\[ 0 = \sum_{i=0}^n \binom{n}{i} v_i K_j(i) = 2^n \binom{n}{j} b_j,\]
so $b_1=\cdots=b_{n-1}=0$, and the vectors $K_j^{(2)}$ are linearly independent.

This completes the proof that $A^*$ is the unique optimum.
\end{proof}
By contrast, the dual almost never has a unique optimum.
\begin{theorem}\label{theorem: non-unique optimal dual d=2}
When $d=2$, the dual of the Delsarte linear program has a unique optimum if and only if $n = 2$.
\end{theorem}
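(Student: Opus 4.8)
The plan is to use complementary slackness against the unique primal optimum $A^*$ of \cref{theorem: unique optimal quasicode d=2} to pin down exactly the face of dual optima, and then observe that this face is a single point precisely when $n=2$.

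The first step is to show that a dual solution $c$ is optimal if and only if $c_0=1$, $c_n=0$, $c_j\ge 0$ for all $j\in[n-1]$, $\sum_{j=0}^n c_jK_j(i)=0$ for every even $i\in[2,n]$, and $\sum_{j=0}^n c_jK_j(i)\le 0$ for every odd $i\in[2,n]$. Since the primal optimal value is $2^{n-1}$ and $A^*_i>0$ precisely for even $i$, complementary slackness forces $\sum_j c_jK_j(i)=0$ for even $i\in[2,n]$; and since $\sum_i A^*_iK_j(i)=0$ for $j\in[n-1]$ (established inside the proof of \cref{theorem: unique optimal quasicode d=2}) while $\sum_i A^*_iK_n(i)=\sum_{i\text{ even}}\binom{n}{i}=2^{n-1}\ne 0$, complementary slackness forces $c_n=0$. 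Conversely, any such $c$ is dual feasible and complementary to $A^*$, hence optimal. For $n=2$ this is immediate: the only constraint beyond $c_0=1$, $c_2=0$, $c_1\ge0$ is $\sum_j c_jK_j(2)=1-2c_1=0$ (there is no odd index in $[2,2]$), so $c_1=\tfrac12$ and the dual optimum is unique.

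For $n\ge 3$ the plan is to perturb the explicit dual optimum $c^*=\big((n-j)/n\big)_{j=0}^n$ of \cref{theorem: unique optimal quasicode d=2}, whose relevant features are that $\sum_j c^*_jK_j(i)=0$ for \emph{every} $i\in[2,n]$ and that $c^*_j>0$ for all $j\in[n-1]$. Set
\[ V=\Big\{\delta\in\R^{n+1}:\ \delta_0=\delta_n=0,\ \textstyle\sum_{j=0}^n\delta_jK_j(i)=0\ \text{for all even }i\in[2,n]\Big\}. \]
This space is cut out of the $n-1$ coordinates $\delta_1,\dots,\delta_{n-1}$ by $\floor{n/2}$ linear equations, so $\dim V\ge (n-1)-\floor{n/2}=:m$, and counting shows $m$ also equals the number of odd indices in $[2,n]$, which is at least $1$ once $n\ge 3$. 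Any $m$ linear functionals on a space of dimension at least $m$ have a common nonzero vector on which they are all $\le 0$ (take a nonzero element of their common kernel when they are dependent or fewer than $\dim V$ in number, and a negated dual-basis vector when they number exactly $\dim V$ and are independent); apply this to the functionals $\delta\mapsto\sum_j\delta_jK_j(i)$ for $i$ odd in $[2,n]$ to obtain $\delta\ne 0$ in $V$ with $\sum_j\delta_jK_j(i)\le 0$ for all odd $i\in[2,n]$. Then for all sufficiently small $t>0$, $c^*+t\delta$ still meets every condition of the first step---nonnegativity on $[n-1]$ from $c^*_j>0$, the even-index equalities from $\delta\in V$ together with the choice of $c^*$, and the odd-index inequalities from $\sum_j c^*_jK_j(i)=0$ plus $t\sum_j\delta_jK_j(i)\le 0$---so $c^*+t\delta$ is a dual optimum distinct from $c^*$ and uniqueness fails.

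The main obstacle is the first step: correctly reading the dual optimal face off of complementary slackness, especially deducing that $c_n$ must vanish, and checking that $\dim V$ is at least the number of odd-index inequality constraints. Once that is done, the $n=2$ case is a one-line computation and the $n\ge 3$ case is elementary linear algebra.
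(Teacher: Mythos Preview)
Your argument is correct. Both directions of the characterization via complementary slackness are sound---in particular your observation that $\sum_i A^*_iK_n(i)=2^{n-1}\ne 0$ forces $c_n=0$, and that $\sum_i A^*_iK_j(i)=0$ for $j\in[n-1]$ makes the converse go through---and the dimension-counting argument showing that $m$ functionals on a space of dimension at least $m$ have a nonzero common $\le 0$ vector is valid (surjectivity of the evaluation map when it is injective, a kernel element otherwise).

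The route, however, differs genuinely from the paper's. The paper does not characterize the dual optimal face via complementary slackness; instead it rewrites the dual feasibility and optimality conditions in terms of the Krawtchouk expansion $c_j=\sum_i b_iK_i(j)$, reducing optimality to $b_0=\tfrac12$ and $b_i\le 0$ for $i\in[2,n]$, and then writes down an explicit one-parameter family of optima for each parity of $n$ (perturbing towards $K_n$ when $n$ is odd, towards $K_{n-1}$ when $n$ is even) and verifies the nonnegativity of the coordinates by hand. Your approach is more conceptual: it avoids the parity split entirely and replaces the explicit constructions by a clean linear-algebra existence argument. What the paper's method buys in exchange is concreteness---it exhibits the extra optima explicitly---whereas your method buys uniformity and brevity, and makes clear that non-uniqueness is forced by the dimension gap $(n-1)-\lfloor n/2\rfloor\ge 1$ rather than by any special identity.
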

\begin{proof}
\cref{proposition: non-unique optimal dual d=n} implies the result when $n=d=2$, so it suffices to show that for all $n \geq 3$, the dual of the Delsarte linear program with $d=2$ does not have a unique optimal solution.

As the objective value was demonstrated in \cref{theorem: unique optimal quasicode d=2} to be $2^{n-1}$, a dual solution $c$ is an optimum if and only if the following properties are satisfied:
\begin{align}
    \sum_{j=0}^n c_j \binom{n}{j} &= 2^{n-1} \label{eq: d=2 dual condition 1}\\
    \sum_{j=0}^n c_j K_j(i) &\leq 0 \quad \text{for all } i \in [2,n] \label{eq: d=2 dual condition 2}\\
    c_j &\geq 0 \quad \text{for all } j \in [n] \label{eq: d=2 dual condition 3}\\
    c_0 &= 1. \label{eq: d=2 dual condition 4}
\end{align}
Expressing $c$ as a linear combination of the Krawtchouk polynomials, i.e., for all $0 \leq j \leq n$,
\[ c_j = b_0 K_0(j) + \cdots + b_n K_n(j), \]
then notice that \cref{eq: d=2 dual condition 1} is equivalent to
\begin{align*}
    \frac{1}{2} = \frac{1}{2^n}\sum_{j=0}^n \binom{n}{j} c_j = \frac{1}{2^n}\sum_{j=0}^n \binom{n}{j} c_j K_0(j) = b_0,
\end{align*}
using the orthogonality of the Krawtchouk polynomials.
By first applying reciprocity and then orthogonality of the Krawtchouk polynomials, we find
\[ \sum_{j=0}^n c_j K_j(i) = \frac{1}{\binom{n}{i}}\sum_{j=0}^n \binom{n}{j}c_j K_i(j) = 2^n b_i.\]
Therefore, \cref{eq: d=2 dual condition 2} is equivalent to $b_i \leq 0$ for all $i \in [2,n]$.

If $n\geq 3$ is odd, we claim the dual solution $c$ given by
\[ c_j = \frac{1}{2}K_0(j) + \frac{1+\varepsilon}{2n}K_1(j)-\frac{\varepsilon}{2}K_n(j) = \frac{1}{2}+\frac{1+\varepsilon}{2}-\frac{1+\varepsilon}{n}j-(-1)^j\frac{\varepsilon}{2} = \begin{cases} 1-\frac{1+\varepsilon}{n}j & j \text{ is even} \\ (1+\varepsilon)\frac{n-j}{n} & j \text{ is odd} \end{cases}\]
for any $0 \leq \varepsilon \leq \frac{1}{n-1}$ is an optimal solution.
As $b_0=\frac{1}{2}$ in this case and $b_i=0$ for all $i \in [2,n-1]$ and $b_n=-\varepsilon/2\leq 0$, we find \cref{eq: d=2 dual condition 1,eq: d=2 dual condition 2} hold.
We directly verify $c_0=1$, and thus it remains to show $c_j \geq 0$ for all $j \in [n]$.
If $j$ is odd, as $0 \leq j \leq n$, we find $c_j \geq 0$.
If $j$ is even, then as $n$ is odd, we have $j \leq n-1$, and thus
\[ c_j = 1-\frac{1+\varepsilon}{n}j \geq 1-(1+\varepsilon)\frac{n-1}{n} = \frac{1-(n-1)\varepsilon}{n} \geq 0,\]
as $\varepsilon \leq \frac{1}{n-1}$.
Hence any $0\leq\varepsilon\leq\frac{1}{n-1}$ yields a distinct optimum $c$, so for odd $n \geq 3$, there are multiple dual optima.

If $n \geq 4$ is even, then we claim the dual solution $c$ given by
\[ c_j = \frac{1}{2}K_0(j) + \frac{1+\varepsilon}{2n}K_1(j)-\frac{\varepsilon}{2n}K_{n-1}(j) = \begin{cases} \frac{n-j}{n} & j \text{ is even} \\ 1+\varepsilon-\frac{1+2\varepsilon}{n}j & j \text{ is odd} \end{cases}\]
for any $0 \leq \varepsilon \leq \frac{1}{n-2}$ is an optimal solution.
As $b_0=\frac{1}{2}$ in this case and $b_i\leq 0$ for all $i \in [2,n]$, we find \cref{eq: d=2 dual condition 1,eq: d=2 dual condition 2} hold.
We directly verify $c_0 = 1$, and thus it remains to show $c_j \geq 0$ for all $j \in [n]$.
If $j$ is even, as $0 \leq j \leq n$, we find $c_j \geq 0$.
If $j$ is odd, then as $n$ is even, we have $j \leq n-1$, and thus
\[ c_j = 1 + \varepsilon - \frac{1+2\varepsilon}{n}j \geq 1+\varepsilon -(1+2\varepsilon)\frac{n-1}{n} = \frac{1-(n-2)\varepsilon}{n} \geq 0,\]
as $\varepsilon\leq\frac{1}{n-2}$.
Hence any $0 \leq \varepsilon \leq \frac{1}{n-2}$ yields a distinct optimum $c$, so for even $n \geq 4$, there are multiple dual optima.
This completes the proof.
\end{proof}

\subsection{Non-uniqueness of primal optimum}\label{subsection: non-unique primal}
While our previous results for $d=1,2$, and for the upper half $2\ceil{d/2}+d>n$ have all had a unique primal optimum, this uniqueness does not hold in general.
The smallest $n$ for which the Delsarte linear program does not have a unique optimum is $(n,d)=(17,5)$, and the next smallest cases are $(21,5)$ and $(23,5)$.
For example, the optimal solutions for $(17,5)$ are the points on the line segment between the two quasicodes
\[ \left(1, 0, 0, 0, 0, 52, \frac{304}{3}, \frac{176}{3}, \frac{250}{3}, \frac{520}{3}, \frac{368}{3}, \frac{112}{3}, 32, 20, 0, 0, 1, 0\right) \]
and
\[ \left(1, 0, 0, 0, 0, 51, \frac{307}{3}, \frac{191}{3}, \frac{235}{3}, \frac{490}{3}, \frac{398}{3}, \frac{142}{3}, 22, 15, 5, 1, 0, 0\right). \]
These three cases were found via a computer search solving a system of linear inequalities and equations using the optimal objective value, Delsarte inequalities, and complementary slackness conditions.
No other cases exist for $1 \leq d \leq n \leq 23$.
This prompts the question of when the Delsarte linear program has a unique optimum.

\begin{question}\label{question: uniqueness}
For which values of $(n,d)$ does the Delsarte linear program have a unique optimum?
Similarly, for which values of $(n,d)$ does the dual have a unique optimum?
\end{question}

\section{Krawtchouk decomposition of optimal quasicodes}\label{section: Krawtchouk decomposition of optimum}
In this section, we further characterize optimal quasicodes by first applying a transformation to the quasicodes.
Suppose $A$ is a quasicode of the Delsarte linear program for a given pair of values $(n,d)$.
Let $A'$ be the vector given by $A_i'=A_i\cdot 2^n/\binom{n}{i}$ for $0 \leq i \leq n$.
If $A$ is the distance distribution of a code $\mathcal{C}$, then $A'_i$ is $4^n/|\mathcal{C}|$ times the proportion of ordered pairs $(x,y)$ of words $x,y\in\F_2^n$ a Hamming distance $i$ apart whose two elements $x$ and $y$ are both in $\mathcal{C}$.
Then there is a unique vector $b$ such that $A'=b K$, given by $b = \frac{1}{2^n}A' K$, or equivalently
\[ b_j = \sum_{i=0}^n \frac{A_i K_i(j)}{\binom{n}{i}} = \frac{1}{\binom{n}{j}}\sum_{i=0}^n A_i K_j(i). \]
This means for all $i$,
\[ A_i = \binom{n}{i} (b_0 K_0(i) + \cdots + b_n K_n(i)) / 2^n. \]
We will refer to this vector $b$ as the \emph{Krawtchouk decomposition} of the quasicode $A$.
Then the Delsarte inequalities for $A$ can be rewritten as $\binom{n}{j} b_j \geq 0$ for all $j \in [n]$, or equivalently simply $b_j \geq 0$.
And the objective function simplifies to $b_0$:
\begin{align*}
    \sum_{i=0}^n A_i
    &= \frac{1}{2^n} \sum_{i=0}^n \binom{n}{i} \sum_{j=0}^n b_j K_j(i)
    = \frac{1}{2^n} \sum_{j=0}^n b_j \sum_{i=0}^n \binom{n}{i} K_j(i) K_0(i)
    = \sum_{j=0}^n b_j \binom{n}{j} \delta_{j0} = b_0.
\end{align*}

Hence, the Delsarte linear program can be rephrased using the Krawtchouk decomposition as follows:
\begin{align*}
    \max \quad & b_0 \\
    \text{such that} \quad & \sum_{j=0}^n \binom{n}{j} b_j = 2^n \\
    & \sum_{j=0}^n b_j K_j(i) \geq 0 \quad \text{for all } i \in [n] \\
    & \sum_{j=0}^n b_j K_j(i) = 0 \quad \text{for all } i \in [d-1] \\
    & b_j \geq 0 \quad \text{for all } j \in [n].
\end{align*}

\cref{theorem: unique optimal quasicode and dual d=1} shows that the unique optimal quasicode $A^*$ when $d=1$ has Krawtchouk decomposition $b=(2^n,0,0,\dots,0)$.
Similarly, \cref{theorem: unique optimal quasicode d=2} shows that when $d=2$, the optimum $A^*$ has Krawtchouk decomposition $b=(2^{n-1},0,0,\dots,0,2^{n-1})$.

The Krawtchouk decomposition of the unique optimum for the upper half is given in the following result.
But first, we will introduce some notation to simplify our future discussions.
Define $h=n-d$, and let $k=2\ceil{d/2}$ be the smallest even integer at least $d$.
Using this notation, the upper half condition simply becomes $k>h$.

\begin{theorem}\label{theorem: upper half Krawtchouk decomposition}
For all $d \geq 1$ and all $h \geq 0$ such that $k=2\ceil{d/2}>h$, the unique optimum to the Delsarte linear program with $(n,d)=(d+h,d)$ has Krawtchouk decomposition $b^*$ given by
\[ b_j^* = 1 - \frac{K_k(j;h+k)}{K_k(1;h+k)}. \]
\end{theorem}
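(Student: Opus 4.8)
The plan is to directly compute the Krawtchouk decomposition of the explicit optimal quasicode $A^*$ supplied by \cref{theorem: unique optimum upper half}, handling the even and odd cases of $d$ separately but packaging the answers uniformly in terms of $k$ and $h$. Recall $b_j = \frac{1}{\binom{n}{j}}\sum_{i=0}^n A^*_i K_j(i)$, and that reciprocity of the Krawtchouk polynomials (\cref{lemma: Krawtchouk reciprocity}) gives $\frac{K_j(i)}{\binom{n}{j}} = \frac{K_i(j)}{\binom{n}{i}}$; this is precisely what converts the ``$K_j(\cdot)$'' that appears in the decomposition into the ``$K_k(j)$'' that appears in the target formula. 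Throughout I will use the elementary identity $K_k(1;m) = \binom{m}{k}\frac{m-2k}{m}$ (immediate from the definition, and implicit in \cref{lemma: Krawtchouk bound 1 n-1}) to recognize the scalar multiple of $K_k(j;h+k)$ that shows up.

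When $d$ is even we have $k=d$ and $h+k=n$. Using $A^*_0 = 1$ and $A^*_d = \frac{n}{2d-n}$ and applying reciprocity,
\[ b_j = 1 + \frac{n}{2d-n}\cdot\frac{K_j(d;n)}{\binom{n}{j}} = 1 + \frac{n}{2d-n}\cdot\frac{K_d(j;n)}{\binom{n}{d}}. \]
Since $K_d(1;n) = \binom{n}{d}\frac{n-2d}{n}$, the scalar $\frac{n}{(2d-n)\binom{n}{d}}$ equals $-\frac{1}{K_d(1;n)}$, so $b_j = 1 - \frac{K_d(j;n)}{K_d(1;n)}$, as claimed. When $d$ is odd we have $k=d+1$ and $h+k=n+1$. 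Starting from $A^*_0=1$, $A^*_d = \frac{d+1}{2d-n+1}$, $A^*_{d+1} = \frac{n-d}{2d-n+1}$ and applying reciprocity to both nonzero terms, the identity $\binom{n}{d+1} = \binom{n}{d}\frac{n-d}{d+1}$ lets one pull out a common factor:
\[ b_j = 1 + \frac{d+1}{(2d-n+1)\binom{n}{d}}\bigl(K_d(j;n) + K_{d+1}(j;n)\bigr). \]
The key structural input here is \cref{lemma: Krawtchouk recurrence} applied with block length $n+1$ and index $d+1$, which gives $K_{d+1}(j;n+1) = K_{d+1}(j;n) + K_d(j;n)$ — exactly the combination present — so $b_j = 1 + \frac{d+1}{(2d-n+1)\binom{n}{d}}K_{d+1}(j;n+1)$. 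Finally, $\binom{n+1}{d+1} = \binom{n}{d}\frac{n+1}{d+1}$ together with $K_{d+1}(1;n+1) = \binom{n+1}{d+1}\frac{n-2d-1}{n+1}$ shows the scalar equals $-\frac{1}{K_{d+1}(1;n+1)}$, yielding $b_j = 1 - \frac{K_{d+1}(j;n+1)}{K_{d+1}(1;n+1)}$. Both cases thus collapse to $1 - \frac{K_k(j;h+k)}{K_k(1;h+k)}$.

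There is no genuine obstacle: the argument is bookkeeping with reciprocity, the three- and two-term recurrences, and two binomial identities. The only point requiring care is invoking \cref{lemma: Krawtchouk recurrence} at the correct (incremented) block length $n+1$ in the odd case, so that the two-term sum telescopes into a single Krawtchouk value. As a consistency check one can verify $b_0 = 1 - \frac{K_k(0;h+k)}{K_k(1;h+k)} = \frac{2k}{k-h}$, which recovers the optimal objective values $\frac{2d}{2d-n}$ (for $d$ even) and $\frac{2d+2}{2d-n+1}$ (for $d$ odd) of \cref{theorem: unique optimum upper half}.
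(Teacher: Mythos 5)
Your proof is correct, and it runs in the opposite direction from the paper's. The paper starts from the claimed formula for $b^*$, applies the inverse Krawtchouk transform $A_i = \binom{n}{i}\sum_j b^*_j K_j(i;n)/2^n$, and then uses \cref{lemma: Krawtchouk column-sum zero}, reciprocity, and orthogonality to collapse the double sum and recover the sparse optimum $A^*$ of \cref{theorem: unique optimum upper half}. You instead start from the sparse $A^*$ and compute $b_j = \frac{1}{\binom{n}{j}}\sum_i A^*_i K_j(i)$ directly; since $A^*$ has only two or three nonzero entries, this is a short finite sum, and reciprocity (\cref{lemma: Krawtchouk reciprocity}) plus \cref{lemma: Krawtchouk recurrence} in the odd case are all that is needed — no appeal to orthogonality. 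Both arguments use the same ingredients and are comparably short; yours buys a slightly cleaner computation by exploiting the sparsity of $A^*$ at the source rather than relying on orthogonality to recover sparsity at the end. Your identification of the scalar via $K_k(1;h+k)=\binom{h+k}{k}\frac{h-k}{h+k}$ and the binomial ratios $\binom{n}{d+1}=\binom{n}{d}\frac{n-d}{d+1}$, $\binom{n+1}{d+1}=\binom{n}{d}\frac{n+1}{d+1}$ is correct, as is the invocation of \cref{lemma: Krawtchouk recurrence} at block length $n+1$, which is the same step the paper takes.
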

\begin{proof}
If $d$ is even, then our expression for $b^*$ becomes
\[ b_j^* = 1 - \frac{K_d(j;n)}{K_d(1;n)}. \]
Then the corresponding quasicode is given by
\[ A_i = \binom{n}{i}\sum_{j=0}^n b_j^* K_j(i;n)/2^n. \]
By \cref{lemma: Krawtchouk column-sum zero}, the constant 1 term in $b^*$ contributes 1 to $A_0$ and 0 everywhere else.
Thus we expand $b_j^*$ to yield
\begin{align*}
    A_i
    &= \delta_{i0} - \binom{n}{i}\sum_{j=0}^n \frac{K_d(j;n)}{K_d(1;n)}K_j(i;n)/2^n
    = \delta_{i0} - \sum_{j=0}^n \frac{K_d(j;n)}{K_d(1;n)}\binom{n}{j}K_i(j;n)/2^n
    \\ &= \delta_{i0} - \binom{n}{d}\delta_{id}\frac{1}{K_d(1;n)}
    = \delta_{i0} + \delta_{id} \frac{n}{2d-n},
\end{align*}
using reciprocity and orthogonality of the Krawtchouk polynomials.
But this expression equals the unique primal optimum $A^*$ as given by \cref{theorem: unique optimum upper half}.

If $d$ is odd, then our expression for $b^*$ becomes
\[ b_j^* = 1 - \frac{K_{d+1}(j;n+1)}{K_{d+1}(1;n+1)}, \]
and the corresponding quasicode is given by
\begin{align*}
    A_i
    &= \delta_{i0} - \sum_{j=0}^n \frac{K_{d+1}(j;n+1)}{K_{d+1}(1;n+1)} \binom{n}{j}K_i(j;n)/2^n
    = \delta_{i0} - \sum_{j=0}^n \frac{K_{d+1}(j;n)+K_d(j;n)}{K_{d+1}(1;n+1)} \binom{n}{j}K_i(j;n)/2^n
    \\ &= \delta_{i0} - \binom{n}{d+1}\frac{\delta_{i,d+1}}{K_{d+1}(1;n+1)} - \binom{n}{d}\frac{\delta_{id}}{K_{d+1}(1;n+1)}
    \\ &= \delta_{i0} + \delta_{id}\frac{d+1}{2d-n+1} + \delta_{i,d+1}\frac{n-d}{2d-n+1},
\end{align*}
where we use \cref{lemma: Krawtchouk recurrence} to reduce $K_{d+1}(j;n+1)$ to block length $n$.
This expression equals the unique primal optimum $A^*$ as given by \cref{theorem: unique optimum upper half}, completing the proof.
\end{proof}
The presence of $k=2\ceil{d/2}$, which rounds $d$ up to the nearest even integer, suggests that there are some parity connections between neighboring values of $(n,d)$.
In particular, if $d=2e$ is even, then $(n-1,2e-1)$ and $(n,2e)$ have the same values of $k$ and $h$, and thus their unique optima $b^*$ of the Krawtchouk decomposition LP are the same, up to truncating the $n$th entry $b^*_n$ for the $(n-1,2e-1)$ case.
This parity phenomenon holds for all $1 \leq d \leq n$, not just the upper half, as seen in the following result.

\begin{theorem}\label{theorem: parity}
For all positive integers $n$ and $e$ where $2e \leq n$, there exist optima $A^E$ and $A^O$ of the $(n,2e)$ and $(n-1,2e-1)$ Delsarte linear programs, respectively, whose Krawtchouk decompositions agree on indices 0 through $n-1$, inclusive.
\end{theorem}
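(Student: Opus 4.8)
The plan is to realize the pair through a \emph{puncturing} operation on quasicodes that, at the level of Krawtchouk decompositions, is just deletion of the last coordinate, together with its inverse \emph{extending} operation. Concretely: take an optimum $A^E$ of the $(n,2e)$ program with Krawtchouk decomposition $b=(b_0,\dots,b_n)$, let $A^O$ be the length-$(n-1)$ quasicode with Krawtchouk decomposition $(b_0,\dots,b_{n-1})$, and show that $A^O$ is feasible for the $(n-1,2e-1)$ program. Working in the decomposition form of the linear program introduced in this section, feasibility of $A^O$ reduces to four checks: nonnegativity of $b_1,\dots,b_{n-1}$ (immediate), the normalization $\sum_{j=0}^{n-1}\binom{n-1}{j}b_j=2^{n-1}$, the Delsarte inequalities $\sum_{j=0}^{n-1}b_jK_j(i;n-1)\ge 0$ for $i\in[n-1]$, and the equalities $\sum_{j=0}^{n-1}b_jK_j(i;n-1)=0$ for $i\in[2e-2]$. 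Since $b_0$ is the objective in the decomposition form, $A^O$ will have the same objective value as $A^E$, so this alone shows $\mathrm{opt}(n-1,2e-1)\ge\mathrm{opt}(n,2e)$.

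For the normalization, write $\binom{n-1}{j}=\tfrac{n-j}{n}\binom{n}{j}$ (valid also at $j=n$) and use $\binom{n}{j}b_j=\sum_i A^E_iK_j(i;n)$ to reduce to the identity $\sum_{j=0}^n\tfrac{n-j}{n}K_j(i;n)=2^{n-1}\delta_{i0}+\tfrac{2^{n-1}}{n}\delta_{i1}$; this follows from \cref{lemma: Krawtchouk column-sum zero} together with $\sum_{j=0}^n jK_j(i;n)=n2^{n-1}\delta_{i0}-2^{n-1}\delta_{i1}$, which in turn comes from writing $j=\tfrac n2 K_0(j)-\tfrac12 K_1(j)$, applying \cref{lemma: Krawtchouk reciprocity}, and invoking \cref{lemma: Krawtchouk orthogonality}. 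Because $A^E_0=1$ and $A^E_1=0$ (as $2e\ge 2$), the normalization drops out with no further hypotheses. For the Delsarte conditions, apply \cref{lemma: Krawtchouk extension} to get $2\sum_{j=0}^{n-1}b_jK_j(i;n-1)=\sum_{j=0}^{n-1}b_j\bigl(K_j(i;n)+K_j(i+1;n)\bigr)$, then complete the two sums to range over $0\le j\le n$; since $K_n(i;n)=(-1)^i$ by \cref{lemma: Krawtchouk basic properties}(3), the two $b_n$-terms cancel and we obtain
\[ 2\sum_{j=0}^{n-1}b_jK_j(i;n-1)=\sum_{j=0}^n b_jK_j(i;n)+\sum_{j=0}^n b_jK_j(i+1;n). \]
For $i\in[n-1]$ both right-hand terms are $\ge 0$ by feasibility of $A^E$ (their arguments lie in $[n]$), and for $i\in[2e-2]$ both vanish (their arguments lie in $[2e-1]$), giving exactly the inequalities and equalities demanded of $A^O$.

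The main obstacle is the reverse inequality $\mathrm{opt}(n,2e)\ge\mathrm{opt}(n-1,2e-1)$, equivalently that truncation of the decomposition maps some $(n,2e)$ optimum onto an optimum of $(n-1,2e-1)$. For this I would introduce the inverse \emph{extending} operation: given a feasible $A^O$ for $(n-1,2e-1)$ with decomposition $(c_0,\dots,c_{n-1})$, append the coordinate $c_n:=2^{n-1}-\sum_{j=1}^{n-1}\binom{n-1}{j-1}c_j$, the unique value forcing $\sum_{j=0}^n\binom{n}{j}c_j=2^n$, and let $A^E$ be the resulting length-$n$ quasicode. One must then verify $A^E$ is feasible for $(n,2e)$: that $c_n\ge 0$ and that $\sum_{j=0}^n c_jK_j(i;n)\ge 0$ for $i\in[n]$, with equality for $i\in[2e-1]$. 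I expect the cleanest route is to compute the entries $A^E_i$ explicitly in terms of those of $A^O$ using \cref{lemma: Krawtchouk recurrence} and \cref{lemma: Krawtchouk basic properties}, identify this operation with the classical ``add a parity check'' extension of codes, and verify the required nonnegativities directly; this is precisely the work of generalizing extending/puncturing to quasicodes carried out in \cref{section: extending and puncturing}, and it is the technical heart of the matter. Granting it, puncturing and extending are mutually inverse objective-preserving maps between the two feasible regions, so the optimal values agree; then for any $(n,2e)$ optimum $A^E$, the quasicode $A^O$ obtained by truncating its Krawtchouk decomposition is a $(n-1,2e-1)$ optimum, and by construction the decompositions of $A^E$ and $A^O$ agree on indices $0$ through $n-1$.
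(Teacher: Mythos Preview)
Your overall strategy matches the paper's: prove that puncturing and extending each preserve feasibility and objective value, conclude the two optimal values coincide, and then puncture an $(n,2e)$ optimum. Your puncturing argument is correct and is essentially \cref{proposition: puncture quasicode}; the paper handles the normalization more economically by just taking $i=0$ in the same \cref{lemma: Krawtchouk extension} identity you used for the Delsarte conditions, rather than going through the separate computation of $\sum_j jK_j(i;n)$.

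The genuine gap is in your proposed extension. You define it at the decomposition level by appending the unique $c_n$ forcing normalization, and then expect this to coincide with the classical parity-check extension. It does not: if $A^O$ has decomposition $(c_0,\dots,c_{n-1})$ and $A'$ is its parity-check extension, then \cref{lemma: extension lemma} gives $\binom{n}{j}b'_j=\binom{n-1}{j}c_j+\binom{n-1}{n-j}c_{n-j}$, so $b'_j=c_j$ only when $c_j=c_{n-j}$. Worse, your ``append $c_n$'' operation need not produce a feasible point. For $(n-1,2e-1)=(3,1)$ take the feasible $A^O=(1,1,0,0)$, whose decomposition is $(2,\tfrac43,\tfrac23,0)$; your recipe gives $c_4=\tfrac{14}{3}$, and then
\[
\sum_{j=0}^4 c_jK_j(3;4)=2-\tfrac{8}{3}+0+0-\tfrac{14}{3}=-\tfrac{16}{3}<0,
\]
so the Delsarte inequality at $i=3$ fails. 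Thus the route you sketch for the reverse inequality does not go through as stated. The paper instead defines extension at the $A$-level (the parity-check map $A'_i=A_{i-1}+A_i$ for even $i$, $A'_i=0$ for odd $i$) and verifies the Delsarte inequalities for $A'$ directly via \cref{lemma: extension lemma}; this is \cref{proposition: extension quasicode}. With both directions in hand, the paper proceeds exactly as you do: puncture a $(n,2e)$ optimum. As a minor point, ``mutually inverse'' overstates what holds and what is needed; only objective-preservation in each direction is required, and indeed extension followed by puncturing is not the identity on general feasible points.
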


We need additional tools, developed in \cref{section: extending and puncturing}, in order to prove \cref{theorem: parity}, so we defer the proof to \cref{section: extending and puncturing}.

We also have another result on the symmetry of the optimal Krawtchouk decompositions that is closely related to the aforementioned parity phenomenon.
Again, we defer the proof to \cref{section: extending and puncturing}.

\begin{theorem}\label{theorem: symmetry}
For all $1 \leq d \leq n$ where $d$ is even, the Delsarte linear program for $(n,d)$ has an optimum $A^E$ whose Krawtchouk decomposition $b^E$ satisfies $b_j^E=b_{n-j}^E$ for all $0 \leq j \leq n$.
\end{theorem}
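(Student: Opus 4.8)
The plan is to obtain \cref{theorem: symmetry} as a corollary of the parity phenomenon (\cref{theorem: parity}) together with the extending/puncturing machinery developed in \cref{section: extending and puncturing}, after first recording the elementary dictionary between symmetry of the Krawtchouk decomposition and evenness of the quasicode. Given a quasicode $A$ with Krawtchouk decomposition $b$, the identity $K_{n-j}(i)=(-1)^i K_j(i)$ from \cref{lemma: Krawtchouk basic properties} together with $\binom{n}{n-j}=\binom{n}{j}$ gives $b_j-b_{n-j}=\frac{2}{\binom{n}{j}}\sum_{i\text{ odd}}A_i K_j(i)$. Hence if $A$ is \emph{even}, meaning $A_i=0$ for all odd $i$, then $b$ is symmetric; conversely, if $b$ is symmetric then $\sum_{i\text{ odd}}A_i K_j(i)=0$ for every $j$, which says $Kw=0$ for the vector $w$ obtained from $A$ by zeroing the even-index entries, and since $K^2=2^n I$ forces $K$ to be invertible, $w=0$. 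So it suffices to exhibit an even optimum of the $(n,2e)$ linear program, where we write $d=2e$.

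To produce one, I would start one parity step down. Since $1\leq 2e\leq n$, the pair $(n-1,2e-1)$ satisfies $1\leq 2e-1\leq n-1$, so the $(n-1,2e-1)$ Delsarte linear program is well-posed; it is feasible, and in the Krawtchouk form its objective is bounded (as $b_0\leq\sum_{j}\binom{n-1}{j}b_j=2^{n-1}$), so it attains an optimum $A^O$. Now apply the extend operation of \cref{section: extending and puncturing} to $A^O$, obtaining a quasicode $A^E$ of the $(n,2e)$ linear program. By the results of that section --- precisely those establishing \cref{theorem: parity}, to the effect that extending carries $(n-1,2e-1)$ optima to $(n,2e)$ optima --- the quasicode $A^E$ is again an optimum. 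Finally, exactly as extending a code appends a parity-check coordinate and thereby makes all pairwise distances even, the extend operation on quasicodes outputs an even quasicode: $A^E_i=0$ for all odd $i$. By the dictionary of the previous paragraph, $b^E_j=b^E_{n-j}$ for all $j$, which is the assertion of \cref{theorem: symmetry}. (As a sanity check, for $e=1$ this construction extends the $(n-1,1)$ optimum $A_i=\binom{n-1}{i}$ to the $(n,2)$ optimum supported on even weights.)

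The genuine work therefore sits in \cref{section: extending and puncturing}: defining extending and puncturing on arbitrary quasicodes and proving these operations preserve feasibility and optimality between the $(n-1,2e-1)$ and $(n,2e)$ linear programs --- the same input that drives \cref{theorem: parity}. Granting that, the present theorem is a short corollary. It is worth noting that one cannot circumvent this by simply symmetrizing an arbitrary optimum: replacing $b$ by the vector with entries $\tfrac12(b_j+b_{n-j})$ does produce a feasible even quasicode, but it replaces the objective $b_0$ by $\tfrac12(b_0+b_n)$, which is strictly smaller exactly when the optimum one began with is not already even. Thus evenness has to be engineered through extending rather than averaged in, which is why the proof routes through \cref{section: extending and puncturing}.
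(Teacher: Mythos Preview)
Your argument is correct and essentially identical to the paper's: take an optimum of the $(n-1,2e-1)$ LP, extend it (\cref{proposition: extension quasicode}) to an even feasible point of the $(n,2e)$ LP, and use the equality of optimal values (from the extend/puncture machinery) to conclude it is optimal, whence symmetry of $b^E$. The only caveat is your framing of \cref{theorem: symmetry} as a corollary of \cref{theorem: parity}: in the paper the dependency runs the other way---\cref{theorem: symmetry} is proved first from the propositions, and \cref{theorem: parity} then follows by puncturing $A^E$---but your actual proof invokes only the extend/puncture propositions, not \cref{theorem: parity} itself, so there is no circularity.
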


We first comment that this symmetry property is equivalent to the quasicodes being \emph{even}, i.e., having a support contained in the set of even integers.
From this interpretation, it is intuitively clear why $d$ must be even.
If the symmetry property holds, then for odd $i$, we find
\[ A_i = \frac{\binom{n}{i}}{2^n}\sum_{j=0}^n b_j K_j(i) = \frac{\binom{n}{i}}{2^n}\sum_{j=0}^n b_{n-j} (-1)^i K_{n-j}(i) = -\frac{\binom{n}{i}}{2^n}\sum_{j=0}^n b_j K_j(i) = - A_i,\]
so $A_i = 0$ for odd $i$.
And if $A_i = 0$ for all odd $i$, then
\[ b_j = \frac{1}{\binom{n}{j}}\sum_{i=0}^n A_i K_j(i) = \frac{1}{\binom{n}{n-j}}\sum_{\substack{0 \leq i \leq n\\i \text{ even}}} A_i K_j(i) = \frac{1}{\binom{n}{n-j}}\sum_{\substack{0 \leq i \leq n\\i \text{ even}}} A_i K_{n-j}(i) = b_{n-j}. \]

\section{Extending and puncturing quasicodes}\label{section: extending and puncturing}

Two common practical operations performed on error-correcting codes are extending and puncturing codes, which respectively increase and decrease the block length by one.
In this section, we generalize both operations to quasicodes, and use these operations to prove \cref{theorem: parity,theorem: symmetry} along with a further refinement of these optima.
\cref{theorem: uniqueness implications} collates all of these results.

For a given code $\mathcal{C}\subseteq\F_2^n$, we may \emph{extend} $\mathcal{C}$ to a new code $\mathcal{C'}\subseteq\F_2^{n+1}$ by adding a parity check bit, i.e., adding a bit that ensures the sum of the bits is always even.
If two codewords in $\mathcal{C}$ differ by an odd number of bits, then in $\mathcal{C'}$ the corresponding codewords differ on their parity check bit as well, increasing their Hamming distance by 1; if two codewords in $\mathcal{C}$ differ by an even number of bits, then their Hamming distance is unchanged by extension.
Hence, if $A$ is the distance distribution of $\mathcal{C}$, then the distance distribution $A'$ of $\mathcal{C}'$ is given by $A_i'=0$ for all odd $i$, and $A_i'=A_i+A_{i-1}$ for all even $i$.
If the minimal distance of $\mathcal{C}$ is $d$, then the minimal distance of $\mathcal{C}'$ is $2\ceil{d/2}$, i.e., rounding $d$ up to the nearest even integer.
We can naturally generalize the definition of extension to quasicodes by applying the same transformation taking $A$ to $A'$.
We will thus call $A'$ the \emph{extension} of the quasicode $A$.
While it is obvious that if $\mathcal{C}$ is a code, then $\mathcal{C'}$ is also a code, it is not immediately clear whether $A$ being a valid quasicode implies $A'$ is a valid quasicode.
However, we will prove that this is indeed true: if $A$ is a feasible solution to the $(n,d)$ Delsarte linear program, then $A'$ is a feasible solution to the $(n+1,2\ceil{d/2})$ Delsarte linear program.
To do so, we first prove the following lemma.

\begin{lemma}\label{lemma: extension lemma}
For $n$ a positive integer, $j \in [n+1]$, and $0 \leq i \leq n$, we have $K_j(2\ceil{i/2};n+1)=K_j(i;n)+K_{n+1-j}(i;n)$.
\end{lemma}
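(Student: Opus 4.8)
The plan is to split into two cases according to the parity of $i$, exploiting that $2\ceil{i/2}$ equals $i$ when $i$ is even and $i+1$ when $i$ is odd. Before doing so, I would simplify the right-hand side uniformly: applying \cref{lemma: Krawtchouk basic properties}(3) at block length $n$ with index $j-1$ (writing $n+1-j = n-(j-1)$) gives $K_{n+1-j}(i;n) = (-1)^i K_{j-1}(i;n)$, so the claimed identity is equivalent to $K_j(2\ceil{i/2};n+1) = K_j(i;n) + (-1)^i K_{j-1}(i;n)$.

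For even $i$ this is precisely $K_j(i;n+1) = K_j(i;n) + K_{j-1}(i;n)$, which is \cref{lemma: Krawtchouk recurrence} applied at block length $n+1$. For odd $i$ I need $K_j(i+1;n+1) = K_j(i;n) - K_{j-1}(i;n)$, which I would obtain by combining \cref{lemma: Krawtchouk extension}, applied at block length $n+1$ in the form $2K_j(i;n) = K_j(i;n+1) + K_j(i+1;n+1)$, with \cref{lemma: Krawtchouk recurrence} in the form $K_j(i;n+1) = K_j(i;n) + K_{j-1}(i;n)$: substituting the latter into the former and solving for $K_j(i+1;n+1)$ gives exactly $K_j(i;n) - K_{j-1}(i;n)$. (Equivalently, one can avoid the extension lemma by writing $K_j(i+1;n+1) = (-1)^j K_j(n-i;n+1)$ via \cref{lemma: Krawtchouk basic properties}(4), expanding $K_j(n-i;n+1)$ with the recurrence, and folding the sign back in using \cref{lemma: Krawtchouk basic properties}(4) at block length $n$.)

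The computation itself is a short chain of the recurrences already recorded in \cref{section: Krawtchouk}, so the only real care needed is bookkeeping: invoking each lemma at the correct block length, and checking the boundary index $j = n+1$, which is permitted by the hypothesis $j \in [n+1]$ but falls outside the stated range of \cref{lemma: Krawtchouk extension}. At $j = n+1$ one has $K_{n+1}(i;n) = 0$ by the convention on binomial coefficients, and the claim collapses to $K_{n+1}(2\ceil{i/2};n+1) = K_0(i;n) = 1$, which is immediate from \cref{lemma: Krawtchouk basic properties}(3) since $2\ceil{i/2}$ is even; a similar one-line check disposes of any remaining corner case such as odd $i = n$. This boundary bookkeeping is the main (and essentially only) obstacle.
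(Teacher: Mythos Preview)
Your proposal is correct and essentially matches the paper's proof. The paper handles even $i$ identically via \cref{lemma: Krawtchouk recurrence}, and for odd $i$ it uses precisely your parenthetical alternative---reflecting via \cref{lemma: Krawtchouk basic properties}(4), applying the recurrence at $n-i$, and folding the sign back---together with a separate one-line check at $i=n$ odd; your primary route through \cref{lemma: Krawtchouk extension} is a harmless variant that trades that corner case for the $j=n+1$ boundary check you already noted.
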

\begin{proof}
If $i$ is even, this immediately follows from \cref{lemma: Krawtchouk recurrence}.
If $i<n$ is odd, then by \cref{lemma: Krawtchouk basic properties} and \cref{lemma: Krawtchouk recurrence},
\begin{align*}
    K_j(i+1;n+1)
    &= (-1)^j K_j(n-i;n+1)
    =(-1)^j (K_j(n-i;n)+K_{j-1}(n-i;n))
    \\ &= K_j(i;n)-K_{j-1}(i;n)
    = K_j(i;n)+K_{n+1-j}(i;n)
\end{align*}
as desired.
If $i=n$ is odd, then
\begin{align*}
    K_j(n+1;n+1)
    &=(-1)^j\binom{n+1}{j}
    =(-1)^j\left(\binom{n}{j}+\binom{n}{j-1}\right)
    =K_j(n;n)-K_{j-1}(n;n)
    \\&=K_j(n;n)+K_{n+1-j}(n;n),
\end{align*}
completing the proof.
\end{proof}

This allows us to prove the extension of a quasicode is a quasicode.

\begin{proposition}\label{proposition: extension quasicode}
If $A$ is a feasible solution to the $(n,d)$ Delsarte linear program, then its extension $A'$ is a feasible solution to the $(n+1,2\ceil{d/2})$ Delsarte linear program.
\end{proposition}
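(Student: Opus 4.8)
The plan is to verify each constraint of the $(n+1,2\ceil{d/2})$ Delsarte linear program for $A'$ directly, exploiting the bookkeeping fact that each original coordinate $A_i$ contributes to $A'$ at position $2\ceil{i/2}$ (an even $i$ stays put, an odd $i$ shifts up by one), so that for any function $f$ one has $\sum_{i'} A'_{i'} f(i') = \sum_{i=0}^n A_i\, f(2\ceil{i/2})$.

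First I would dispatch the routine constraints. The normalization $A'_0 = A_0 + A_{-1} = 1$ is immediate, and nonnegativity holds because $A'_i$ is either $0$ (for odd $i$) or a sum $A_i + A_{i-1}$ of nonnegative terms (for even $i$). For the minimal-distance constraints, since $2\ceil{d/2}\in\{d,d+1\}$, every even index $i$ with $1\le i\le 2\ceil{d/2}-1$ satisfies $i\le d-1$, hence also $i-1\le d-1$, so feasibility of $A$ forces $A_i = A_{i-1} = 0$ and thus $A'_i = 0$; odd indices contribute $A'_i = 0$ automatically.

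The substantive step is the Delsarte inequalities. Reindexing as above gives, for each $j\in[n+1]$,
\[
\sum_{i'=0}^{n+1} A'_{i'}\,K_j(i';n+1) \;=\; \sum_{i=0}^{n} A_i\,K_j\!\left(2\ceil{i/2};n+1\right).
\]
Applying \cref{lemma: extension lemma} to rewrite $K_j(2\ceil{i/2};n+1) = K_j(i;n) + K_{n+1-j}(i;n)$ splits the right-hand side into $\sum_i A_i K_j(i;n) + \sum_i A_i K_{n+1-j}(i;n)$. Now for $j\in[n+1]$ both indices $j$ and $n+1-j$ lie in $[0,n]$ except that $j=n+1$ pushes the first index to $n+1$, where $K_{n+1}(\cdot;n)\equiv 0$ by convention and that term simply vanishes. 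When an index equals $0$ the corresponding sum is $\sum_i A_i \ge A_0 = 1>0$; when it lies in $[n]$ the corresponding sum is $\ge 0$ by the Delsarte inequalities satisfied by $A$. Hence both summands are nonnegative, so the total is nonnegative, which establishes the Delsarte inequalities for $A'$.

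I do not expect a genuine obstacle: once \cref{lemma: extension lemma} is available the argument is essentially bookkeeping, and the only points requiring care are the boundary case $j=n+1$ (where one term drops out) and the index value $0$ (where the "inequality" is the trivially true statement $\sum_i A_i \ge 1$). Note that the proposition asserts only feasibility; that extension also preserves the objective value $\sum A'_{i'} = \sum_i A_i$ is a separate, immediate consequence of the same reindexing and is used only in later results.
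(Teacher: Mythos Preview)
Your proof is correct and follows essentially the same approach as the paper: both reindex $\sum_{i'} A'_{i'} K_j(i';n+1)$ as $\sum_i A_i\, K_j(2\ceil{i/2};n+1)$, apply \cref{lemma: extension lemma} to split into two Krawtchouk sums at block length $n$, and invoke the Delsarte inequalities for $A$ (with the same boundary handling at $j=n+1$). You are slightly more explicit than the paper about the routine constraints and the $n+1-j=0$ case, but the argument is the same.
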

\begin{proof}
The only nontrivial constraints that we need to verify are the Delsarte inequalities.
But \cref{lemma: extension lemma} immediately implies that for all $j \in [n+1]$,
\begin{align*}
    \sum_{i=0}^{n+1} A_i' K_j(i;n+1)
    &= \sum_{i=0}^n A_i K_j(2\ceil{i/2};n+1)
    = \sum_{i=0}^n A_i K_j(i;n) + \sum_{i=0}^n A_i K_{n+1-j}(i;n) \geq 0
\end{align*}
by the Delsarte inequalities for $A$, where if $j=n+1$ the first sum vanishes and the second sum becomes simply $\sum A_i \geq 0$ by the nonnegativity constraints.
\end{proof}

Note that this result implies that the optimal objective value of the $(n-1,2e-1)$ Delsarte linear program is at most the optimal objective value of the $(n,2e)$ Delsarte LP, as any feasible solution of the $(n-1,2e-1)$ LP can be extended to a feasible solution of the $(n,2e)$ LP with the same objective value.

We now prove the reverse direction, that any feasible solution of the $(n,d)$ LP can be mapped to a feasible solution of the $(n-1,d-1)$ LP with the same objective value.
We do so by generalizing the notion of puncturing from codes to quasicodes.
One may \emph{puncture} a code $\mathcal{C}\subseteq\F_2^n$ to yield a new code $\mathcal{C}'\subseteq\F_2^{n-1}$ by simply removing a bit, i.e., an index.
It is easy to see that if $\mathcal{C}$ had minimal distance $d\geq 2$, then $\mathcal{C}'$ is a valid code with minimal distance at least $d-1$.
If $\mathcal{C}$ has support $S$, then $\mathcal{C}'$ has support $S'\subseteq(S\cup(S-1))\setminus\{n\}$, where $S-1:=\{i-1\mid i \in S\}$.
It is important to note that the behavior of puncturing a code depends on which index is removed: especially if $\mathcal{C}$ is not symmetric with respect to its indices, even the distance distribution $A'$ of $\mathcal{C}'$ may depend on the choice of removed index.
Our definition of puncturing a quasicode, which need not be realized by a code, conveniently has no such ambiguity.
If quasicode $A$ has Krawtchouk decomposition $b=(b_0,\dots,b_n)$, then \emph{puncturing} $A$ yields a quasicode $A^P$ given by the truncated Krawtchouk decomposition $b^P=(b_0,\dots,b_{n-1})$.
We now show that if $A$ is a feasible solution to the $(n,d)$ Delsarte linear program and has support $S$, then $A^P$ is a feasible solution to the $(n-1,d-1)$ Delsarte linear program with support $S'\subseteq(S\cup(S-1))\setminus\{n\}$, thus exhibiting the same properties as punctured codes.

\begin{proposition}\label{proposition: puncture quasicode}
If $A$ is a feasible solution to the $(n,d)$ Delsarte linear program for $d \geq 2$ and has support $S$, then its punctured quasicode $A^P$ is a feasible solution to the $(n-1,d-1)$ Delsarte linear program with support $S'\subseteq(S\cup(S-1))\setminus\{n\}$.
\end{proposition}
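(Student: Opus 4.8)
The plan is to convert the abstract definition of puncturing (truncating the Krawtchouk decomposition) into the concrete formula
\[ A^P_i = \frac{n-i}{n}A_i + \frac{i+1}{n}A_{i+1} \qquad (0 \le i \le n-1), \]
which is exactly the transformation the distance distribution of a code undergoes when a coordinate is deleted, averaged over the choice of coordinate. Once this identity is available, every constraint of the $(n-1,d-1)$ Delsarte linear program, as well as the support claim, falls out directly.

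To derive the formula I would start from $A^P_i = \binom{n-1}{i}2^{-(n-1)}\sum_{j=0}^{n-1} b_j K_j(i;n-1)$, which holds because $b^P=(b_0,\dots,b_{n-1})$ is by definition the Krawtchouk decomposition of $A^P$ at block length $n-1$. I would then invoke \cref{lemma: Krawtchouk extension} to replace $2K_j(i;n-1)$ by $K_j(i;n)+K_j(i+1;n)$, rewrite $\sum_{j=0}^{n-1}b_jK_j(i;n)$ as $\sum_{j=0}^{n}b_jK_j(i;n) - b_nK_n(i;n) = 2^nA_i/\binom{n}{i} - (-1)^i b_n$ using the Krawtchouk decomposition identity and $K_n(i;n)=(-1)^i$ from \cref{lemma: Krawtchouk basic properties}, and do the same with $i+1$ in place of $i$. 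The two $b_n$ terms cancel since $(-1)^i+(-1)^{i+1}=0$, leaving $A^P_i = \binom{n-1}{i}\bigl(A_i/\binom{n}{i}+A_{i+1}/\binom{n}{i+1}\bigr)$; simplifying $\binom{n-1}{i}/\binom{n}{i}=(n-i)/n$ and $\binom{n-1}{i}/\binom{n}{i+1}=(i+1)/n$ yields the claimed identity.

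With the formula in hand the verification is bookkeeping. Nonnegativity $A^P_i\ge 0$ is immediate. The normalization $A^P_0 = A_0 + A_1/n = 1$ uses $A_0=1$ together with $A_1=0$, and this is precisely where the hypothesis $d\ge 2$ enters. For $i\in[d-2]$ both $A_i$ and $A_{i+1}$ have indices in $[d-1]$ and hence vanish, so $A^P_i=0$. The Delsarte inequalities for $A^P$ are, after passing to the Krawtchouk decomposition, simply $b^P_j=b_j\ge 0$ for $j\in[n-1]$, which holds since $A$ is feasible for $(n,d)$; equivalently $\sum_{i=0}^{n-1}A^P_iK_j(i;n-1)=\binom{n-1}{j}b_j\ge 0$. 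Finally, if $A^P_i>0$ for some $1\le i\le n-1$, then $A_i>0$ or $A_{i+1}>0$, so $i\in S$ or $i\in S-1$; since $i\le n-1<n$ this gives $S'\subseteq (S\cup(S-1))\setminus\{n\}$.

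The only step with real content is the derivation of the concrete formula, and the one subtlety there is that \cref{lemma: Krawtchouk extension} only supplies the three-term identity for $j\le n-1$, so the $j=n$ term of the Krawtchouk decomposition must be peeled off by hand before the lemma is applied; it then conveniently cancels. I expect no other obstacles.
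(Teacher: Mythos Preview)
Your proposal is correct and follows essentially the same route as the paper: both arguments apply \cref{lemma: Krawtchouk extension} to $\sum_{j=0}^{n-1}b_jK_j(i;n-1)$, peel off and cancel the $b_n$ contributions via $(-1)^i+(-1)^{i+1}=0$, and read off the constraints from the resulting identity. The only difference is cosmetic: the paper stops at $\sum_{j=0}^{n-1}b_jK_j(i;n-1)=\tfrac12\sum_j b_jK_j(i;n)+\tfrac12\sum_j b_jK_j(i+1;n)$ and checks the constraints there, whereas you push one step further to the explicit averaged-puncturing formula $A^P_i=\tfrac{n-i}{n}A_i+\tfrac{i+1}{n}A_{i+1}$, which is a pleasant bonus but not a different idea.
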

\begin{proof}
Let $b$ be the Krawtchouk decomposition of $A$.
We will show $b^P$ satisfies the necessary constraints.
Using \cref{lemma: Krawtchouk extension}, for $0 \leq i \leq n-1$,
\begin{align*}
    \sum_{j=0}^{n-1} b_j K_j(i;n-1) 
    &= \frac{1}{2}\sum_{j=0}^{n} b_j (K_j(i;n)+K_j(i+1;n)) - \frac{1}{2}b_n(K_n(i;n)+K_n(i+1;n))
    \\ &= \frac{1}{2}\sum_{j=0}^n b_j(K_j(i;n)+K_j(i+1;n)) - \frac{1}{2}b_n((-1)^i+(-1)^{i+1})
    \\ &= \frac{1}{2}\sum_{j=0}^n b_jK_j(i;n)+\frac{1}{2}\sum_{j=0}^n b_j K_j(i+1;n).
\end{align*}
For $i=0$, as $d \geq 2$ this yields
\[ \sum_{j=0}^{n-1} b_j \binom{n-1}{j} = \frac{1}{2}\sum_{j=0}^n b_j \binom{n}{j} = 2^{n-1}, \]
using the constraints that $b$ must satisfy for the $(n,d)$ linear program.
These constraints for $b$ also imply that for $i \in [d-2]$, this expression equals 0, and more generally for $i \in [n-1]$ it is nonnegative.
In fact, if positive integer $i \not\in (S\cup(S-1))\setminus\{n\}$, then as $A_i=A_{i+1}=0$, this implies $A_i^P = \frac{\binom{n-1}{i}}{2^{n-1}}\sum b_j K_j(i;n-1) = 0$.
Hence, $b^P$ is a feasible solution to the $(n-1,d-1)$ Delsarte linear program with support $S'\subseteq(S\cup(S-1))\setminus\{n\}$.
\end{proof}

\cref{proposition: puncture quasicode} implies that the optimal objective value of the $(n,d)$ Delsarte linear program is at most the optimal objective value of the $(n-1,d-1)$ Delsarte LP, as any feasible solution of the $(n,d)$ LP can be punctured to a feasible solution of the $(n-1,d-1)$ LP with the same objective value, as $b_0$ is unchanged.
Combined with \cref{proposition: extension quasicode}, this implies the $(n-1,2e-1)$ and $(n,2e)$ Delsarte linear programs have the same optimal objective value.
This now enables us to prove \cref{theorem: parity,theorem: symmetry}.

\begin{proof}[Proof of \cref{theorem: symmetry}]
Consider an optimum $A^*$ to the $(n-1,2e-1)$ LP.
By \cref{proposition: extension quasicode}, extending $A^*$ to $A^E$ gives an optimal quasicode to the $(n,2e)$ LP.
In particular, extension creates an even quasicode, and thus the corresponding Krawtchouk decomposition $b^E$ is symmetric.
\end{proof}

To prove \cref{theorem: parity}, we simply puncture $A^E$.

\begin{proof}[Proof of \cref{theorem: parity}]
Puncturing $A^E$ yields an optimum $A^O$ to the $(n-1,2e-1)$ LP, and the corresponding Krawtchouk decompositions $b^E$ and $b^O$ agree on indices $0$ through $n-1$, inclusive.
\end{proof}

In fact, there are further patterns for $A^E$ and $A^O$, as seen in the following result.

\begin{theorem}\label{theorem: uniqueness implications}
For all positive integers $n$ and $e$ such that $2e \leq n$, there exist optima $A^E$ and $A^O$ of the $(n,2e)$ and $(n-1,2e-1)$ Delsarte linear programs, respectively, with corresponding Krawtchouk decompositions $b^E$ and $b^O$, such that the following properties hold:
\begin{enumerate}[label=(\roman*)]
    \item For all $0 \leq j \leq n$, we have $b_j^E=b_{n-j}^E$.
    \item $b^O=(b_0^E,\dots,b_{n-1}^E)$.
    \item For all even $i \in [n]$, we have $A_i^E = A_{i-1}^O + A_i^O$ and $A_{i-1}^O \binom{n-1}{i}=A_i^O\binom{n-1}{i-1}$, where we define $A_i^O=0$ if $i\not\in[0,n-1]$.
    \item If $A^O$ is the unique optimum for $(n-1,2e-1)$, then $A^E$ is the unique optimum for $(n,2e)$.
\end{enumerate}
\end{theorem}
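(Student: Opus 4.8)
The plan is to build $A^E$ and $A^O$ by the extension--puncture construction already used for \cref{theorem: symmetry,theorem: parity}. Fix any optimum $A^*$ of the $(n-1,2e-1)$ program, let $A^E$ be its extension and let $A^O$ be the puncturing of $A^E$. By \cref{proposition: extension quasicode} together with the equality of optimal values of the $(n-1,2e-1)$ and $(n,2e)$ programs (shown earlier in this section), $A^E$ is an optimum of the $(n,2e)$ program; by \cref{proposition: puncture quasicode} and the same equality, $A^O$ is an optimum of the $(n-1,2e-1)$ program. Property~(i) is then immediate: extension produces an even quasicode, and evenness of a quasicode is equivalent to symmetry of its Krawtchouk decomposition, as recorded after \cref{theorem: symmetry}. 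Property~(ii) is also immediate, since puncturing a quasicode simply truncates its Krawtchouk decomposition, so $b^O=(b_0^E,\dots,b_{n-1}^E)$.

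For~(iii) I would reuse the computation from the proof of \cref{proposition: puncture quasicode}. Write $s_i=\sum_{j=0}^n b_j^E K_j(i;n)$, which is the scaled value $A_i^E\cdot 2^n/\binom{n}{i}$; since $A^E$ is even, $s_i=0$ for odd $i$. Applying \cref{lemma: Krawtchouk extension} exactly as in that proof gives $A_i^O\cdot 2^{n-1}/\binom{n-1}{i}=\tfrac12(s_i+s_{i+1})$ for $0\le i\le n-1$. For even $i\in[n]$, both $A_{i-1}^O\cdot 2^{n-1}/\binom{n-1}{i-1}$ and $A_i^O\cdot 2^{n-1}/\binom{n-1}{i}$ therefore equal $\tfrac12 s_i$ (the other summand vanishing by evenness), which is exactly the identity $A_{i-1}^O\binom{n-1}{i}=A_i^O\binom{n-1}{i-1}$; adding the two equalities and using $\binom{n-1}{i-1}+\binom{n-1}{i}=\binom{n}{i}$ gives $A_{i-1}^O+A_i^O=s_i\binom{n}{i}/2^n=A_i^E$. (Equivalently, this shows $A^E$ is literally the extension of $A^O$, not merely that $A^O$ is the puncturing of $A^E$; one can also see this directly by computing the Krawtchouk decomposition of the extension of $A^O$ via \cref{lemma: extension lemma}, which equals $\tfrac{n-j}{n}b_j^O+\tfrac{j}{n}b_{n-j}^O$ and collapses to $b_j^E$ once the symmetry from~(i) is used.)

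For~(iv), assume $A^O$ is the unique optimum of the $(n-1,2e-1)$ program and let $B$ be any optimum of the $(n,2e)$ program. Puncturing $B$ does not change its objective value $b_0$ and, by \cref{proposition: puncture quasicode}, yields a feasible point of the $(n-1,2e-1)$ program; since the two programs have the same optimal value, this punctured point is optimal and hence equals $A^O$. Thus the Krawtchouk decomposition $b^B$ of $B$ agrees with $b^O=(b_0^E,\dots,b_{n-1}^E)$, so $b^B$ and $b^E$ can differ only in the top coordinate $b_n$. To eliminate this freedom I would invoke the Delsarte equality constraint of the $(n,2e)$ program at distance $i=1$ (present since $1\le 2e-1$): using reciprocity (\cref{lemma: Krawtchouk reciprocity}) to write $K_j(1;n)=\binom{n}{j}(n-2j)/n$ together with $\sum_{j=0}^n\binom{n}{j}b_j=2^n$, this constraint is equivalent to $\sum_{j=0}^n j\binom{n}{j}b_j=n2^{n-1}$. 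Both $b^B$ and $b^E$ satisfy it and agree for $j\le n-1$, so subtracting forces $n\,b_n^B=n\,b_n^E$, hence $b^B=b^E$ and $B=A^E$.

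I expect the main obstacle to be~(iv): the point is to recognize that the single extra degree of freedom gained by extending---the top Krawtchouk coefficient $b_n$---is in fact pinned down by the distance-$1$ (equivalently, first-moment) equality constraint shared by all feasible quasicodes of the $(n,2e)$ program, which is precisely what allows uniqueness to propagate from the $(n-1,2e-1)$ program up to the $(n,2e)$ program. Parts~(i) and~(ii) are essentially restatements of \cref{theorem: symmetry,theorem: parity}, and part~(iii) amounts to rerunning the computation behind \cref{proposition: puncture quasicode} while keeping track of evenness, so these should be routine.
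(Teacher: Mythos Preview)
Your argument is correct and follows the same overall extension--puncture scaffold as the paper, but two of the four parts are handled by slightly different devices, and one small boundary case deserves attention.

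For~(iii) you reuse the identity from the proof of \cref{proposition: puncture quasicode} (i.e., \cref{lemma: Krawtchouk extension}), obtaining $A_i^O\cdot 2^{n-1}/\binom{n-1}{i}=\tfrac12(s_i+s_{i+1})$ and then killing one summand by evenness. The paper instead works from \cref{lemma: Krawtchouk recurrence} together with the symmetry $b_j^E=b_{n-j}^E$ to prove $\sum_j b_j^E K_j(i;n)=2\sum_j b_j^E K_j(i;n-1)$ for even $i$ and the shifted identity for odd $i$. Both routes are short; yours has the virtue of literally recycling an earlier computation. One caveat: your sentence ``for even $i\in[n]$, both $A_{i-1}^O\cdot 2^{n-1}/\binom{n-1}{i-1}$ and $A_i^O\cdot 2^{n-1}/\binom{n-1}{i}$ equal $\tfrac12 s_i$'' is ill-posed at $i=n$ when $n$ is even, since $\binom{n-1}{n}=0$ and the puncture formula only covers $0\le i\le n-1$. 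The paper treats this endpoint separately; you should too (it is immediate: $A_{n-1}^O=\tfrac12 s_n/2^{n-1}=A_n^E$ and both sides of the binomial identity vanish).

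For~(iv) your use of the $i=1$ equality constraint is correct but more than you need. The paper simply invokes the normalization constraint $\sum_{j=0}^n\binom{n}{j}b_j=2^n$ (equivalently $A_0=1$) of the Krawtchouk-decomposition LP: since $b^B$ and $b^E$ already agree on indices $0,\dots,n-1$, this single linear equation with nonzero coefficient $\binom{n}{n}=1$ on $b_n$ forces $b_n^B=b_n^E$ directly, without passing through the first-moment identity.
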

\begin{proof}
We have already shown we may choose optima $A^E$ and $A^O$ so that the first two conditions hold.
We now address the third property.
We trivially have $A_0^E = A_0^O = 1$.
For even $i \in [0,n-1]$, using the Krawtchouk recurrence from \cref{lemma: Krawtchouk recurrence} as well as the symmetry of $b^E$,
\begin{align*}
    \sum_{j=0}^n b_j^E K_j(i;n)
    &= \sum_{j=0}^n b_j^E (K_j(i;n-1)+K_{j-1}(i;n-1))
    \\ &= \sum_{j=0}^{n} \left(b_j^E K_j(i;n-1) + b_{n-j}^E (-1)^i K_{n-j}(i;n-1)\right)
    = 2 \sum_{j=0}^{n-1} b_j^E K_j(i;n-1).
\end{align*}
For odd $i \in [n-1]$, a similar argument yields
\begin{align*}
    2 \sum_{j=0}^{n-1} b_j^E K_j(i;n-1)
    &= \sum_{j=0}^n b_j^E K_j(i;n-1) - \sum_{j=0}^n b_{n-j}^E K_{n-j-1}(i;n-1)
    \\ &= \sum_{j=0}^n b_j^E \left(K_j(i;n-1)-K_{j-1}(i;n-1)\right)
    \\ &= \sum_{j=0}^n (-1)^j b_j^E (K_j(n-i-1;n-1)+K_{j-1}(n-i-1;n-1))
    \\ &= \sum_{j=0}^n (-1)^j b_j^E K_j(n-i-1;n) = \sum_{j=0}^n b_j^E K_j(i+1;n).
\end{align*}
Combining these, we find for even $i \in [n-1]$,
\begin{align*}
    A_{i-1}^O + A_i^O
    &= \frac{1}{2^{n-1}}\left(\binom{n-1}{i-1}\sum_{j=0}^{n-1}b_j^E K_j(i-1;n-1) + \binom{n-1}{i}\sum_{j=0}^{n-1} b_j^E K_j(i;n-1)\right)
    \\ &= \frac{1}{2^n}\left(\binom{n-1}{i-1}\sum_{j=0}^n b_j^E K_j(i;n) + \binom{n-1}{i}\sum_{j=0}^n b_j^E K_j(i;n)\right) = \frac{\binom{n}{i}}{2^n}\sum_{j=0}^n b_j^E K_j(i;n) = A_i^E.
\end{align*}
Additionally, this shows
\[ \frac{A_{i-1}^O}{A_i^O}=\frac{\binom{n-1}{i-1}}{\binom{n-1}{i}},\]
as desired.
If $n$ is odd, the proof is complete; however, if $n$ is even, it remains to show $A_n^E = A_{n-1}^O$ and $A_{n-1}^O \binom{n-1}{n}=A_n^O \binom{n-1}{n-1}$.
The latter can be immediately addressed because both sides equal zero, as $\binom{n-1}{n}=A_n^O=0$.
For the former, as $n-1$ is odd, using the same argument as before gives
\begin{align*}
    A_{n-1}^O = \frac{1}{2^{n-1}}\sum_{j=0}^{n-1} b_j^E K_j(n-1;n-1) = \frac{1}{2^n}\sum_{j=0}^n b_j^E K_j(n;n) = A_n^E,
\end{align*}
completing the proof.

Finally, for the fourth property, if $A^O$ is the unique optimum for $(n-1,2e-1)$, then consider any optimum $A^*$ of the $(n,2e)$ linear program.
Puncturing $A^*$ must yield an optimum for $(n-1,2e-1)$, which can only be $A^O$.
Thus the Krawtchouk decomposition $b^*$ of $A^*$ must satisfy $b^*_i = b_i^O$ for all $0 \leq i \leq n-1$.
The first constraint of the Krawtchouk decomposition LP then fixes $b^*_n$ as well, implying there is a unique optimum to the $(n,2e)$ linear program, which must be $A^E$.
\end{proof}

Hence, extending $A^O$ yields $A^E$, and puncturing $A^E$ yields $A^O$.

The fourth condition implies that if $(n-1,2e-1)$ has a unique optimum, then so does $(n,2e)$; however, the converse does not hold in general.
As shown in \cref{subsection: non-unique primal}, the $(18,6)$ Delsarte linear program has a unique optimum, but $(17,5)$ does not.

The third property of \cref{theorem: uniqueness implications} follows from the symmetry of $b^E$, which requires $b^O$ to satisfy a truncated symmetry.
Constraining the $(n,2e)$ and $(n-1,2e-1)$ Delsarte linear programs to have symmetry and truncated symmetry, respectively, i.e., $b_j = b_{n-j}$, results in a stronger parity phenomenon: these two symmetry-constrained linear programs are equivalent, i.e., have the same feasible region and objective.

\begin{proposition}\label{proposition: symmetry implies parity}
For all positive integers $n$ and $e$ such that $2e \leq n$, the symmetry-constrained $(n,2e)$ and $(n-1,2e-1)$ Delsarte linear programs, where we add the constraint that $b_j=b_{n-j}$ for all $0 \leq j \leq n$, are equivalent.
\end{proposition}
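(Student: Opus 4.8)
The plan is to exhibit an explicit affine bijection between the feasible regions of the two symmetry-constrained linear programs under which the objective functions correspond, where the bijection is simply extending and puncturing quasicodes. In Krawtchouk-decomposition coordinates, the symmetry-constrained $(n,2e)$ program has variables $b_0,\dots,b_n$ with the full symmetry $b_j=b_{n-j}$ for $0\le j\le n$, while the symmetry-constrained $(n-1,2e-1)$ program has variables $b_0,\dots,b_{n-1}$ with the truncated symmetry $b_j=b_{n-j}$ for $1\le j\le n-1$ (the only indices for which that constraint makes sense), and both maximize $b_0$. The candidate bijection is truncation $(b_0,\dots,b_n)\mapsto(b_0,\dots,b_{n-1})$, whose inverse is $(b_0,\dots,b_{n-1})\mapsto(b_0,\dots,b_{n-1},b_0)$; truncating a symmetric vector loses nothing since $b_n=b_0$, and restoring $b_n:=b_0$ recovers the symmetry. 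It remains to check that both maps preserve feasibility and the objective $b_0$.

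The forward direction should be immediate: truncation of a Krawtchouk decomposition is exactly the puncturing operation, so by \cref{proposition: puncture quasicode} (applicable since $2e\ge2$) a feasible point of the $(n,2e)$ program truncates to a feasible point of the $(n-1,2e-1)$ program with the same value of $b_0$, and truncating a symmetric vector obviously yields one with the truncated symmetry.

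For the reverse direction, I start from $(b_0,\dots,b_{n-1})$ feasible for the $(n-1,2e-1)$ program with the truncated symmetry, let $A^O$ be the quasicode with this Krawtchouk decomposition, and let $A^E$ be its extension. By \cref{proposition: extension quasicode}, $A^E$ is feasible for the $(n,2\ceil{(2e-1)/2})=(n,2e)$ program, and since extension produces an even quasicode, its decomposition $b^E$ is symmetric. The crux is to check that $b^E=(b_0,\dots,b_{n-1},b_0)$: one has $b^E_0=\sum_i A^E_i=\sum_i A^O_i=b_0$, hence also $b^E_n=b_0$ by the symmetry of $b^E$, and for $1\le j\le n-1$, applying \cref{lemma: extension lemma} with $n$ replaced by $n-1$ gives
\[
    \binom{n}{j}b^E_j=\sum_{i=0}^{n-1}A^O_i\,K_j(2\ceil{i/2};n)=\binom{n-1}{j}b_j+\binom{n-1}{n-j}b_{n-j},
\]
and invoking the truncated symmetry $b_{n-j}=b_j$ together with Pascal's rule collapses the right-hand side to $\binom{n}{j}b_j$, so $b^E_j=b_j$. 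Thus $b^E=(b_0,\dots,b_{n-1},b_0)$ is the decomposition of the feasible quasicode $A^E$, hence it is feasible for the $(n,2e)$ program, it is symmetric, and its objective value $b^E_0=b_0$ is unchanged.

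Combining the two directions, truncation is an affine isomorphism between the two feasible regions carrying one objective to the other, so the programs are equivalent. I expect the only genuinely nontrivial point to be the decomposition identity in the reverse direction, namely that extending a truncated-symmetric quasicode restores precisely the symmetry-completed vector $(b_0,\dots,b_{n-1},b_0)$ rather than some other even quasicode; this rests on the same bookkeeping (\cref{lemma: extension lemma}, \cref{lemma: Krawtchouk recurrence}, and Pascal's identity) already used in the proof of \cref{theorem: uniqueness implications}, and everything else follows formally from the extension/puncturing results.
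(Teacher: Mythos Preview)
Your proof is correct. The approach is close in spirit to the paper's but is organized differently: the paper adds a dummy variable $b_n=b_0$ to the $(n-1,2e-1)$ program and then verifies, constraint by constraint, that the two systems coincide as linear programs on the common variable set $(b_0,\dots,b_n)$, re-deriving the needed identities from the proof of \cref{theorem: uniqueness implications}. You instead package the equivalence as the explicit bijection ``truncate/append $b_0$'' and delegate feasibility preservation to \cref{proposition: extension quasicode} and \cref{proposition: puncture quasicode}, so that the only new computation is the decomposition identity $\binom{n}{j}b^E_j=\binom{n-1}{j}b_j+\binom{n-1}{n-j}b_{n-j}$ via \cref{lemma: extension lemma}. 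Both arguments rest on the same Krawtchouk recurrences; yours is a bit more modular since it reuses the extension/puncturing propositions as black boxes, while the paper's direct verification makes the constraint correspondence (odd-$i$ constraints in $(n,2e)$ become vacuous, even-$i$ constraints match, etc.) more explicit.
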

\begin{proof}
We may write the symmetry-constrained $(n,2e)$ Delsarte LP as
\begin{align*}
    \max \quad & b_0 \\
    \text{such that} \quad & \sum_{j=0}^n \binom{n}{j} b_j = 2^n \tag{$\star$}\\
    & \sum_{j=0}^n b_j K_j(i;n) \geq 0 \quad \text{for all } i \in [n] \tag{$\star\star$}\\
    & \sum_{j=0}^n b_j K_j(i;n) = 0 \quad \text{for all } i \in [2e-1] \tag{$\star\star\star$}\\
    & b_j = b_{n-j} \quad \text{for all } j \in \{0,1,\dots,n\} \\
    & b_j \geq 0 \quad \text{for all } j \in [n].
\end{align*}
Similarly, we impose the symmetry condition on the $(n-1,2e-1)$ Delsarte LP, yielding
\begin{align*}
    \max \quad & b_0 \\
    \text{such that} \quad & \sum_{j=0}^{n-1} \binom{n-1}{j} b_j = 2^{n-1} \tag{$\ast$}\\
    & \sum_{j=0}^{n-1} b_j K_j(i;n-1) \geq 0 \quad \text{for all } i \in [n-1] \tag{$\ast\ast$}\\
    & \sum_{j=0}^{n-1} b_j K_j(i;n-1) = 0 \quad \text{for all } i \in [2e-2] \tag{$\ast\ast\ast$}\\
    & b_j = b_{n-j} \quad \text{for all } j \in [n-1] \\
    & b_j \geq 0 \quad \text{for all } j \in [n-1].
\end{align*}
In the $(n-1,2e-1)$ Delsarte LP, we may add the variable $b_n$ with the constraint $b_n = b_0 \geq 0$ without affecting the LP, allowing the symmetry and non-negativity conditions, i.e., the last two constraints, of the two LPs to be identical, using the same decision variables $b_0,\dots,b_n$.

We first show that constraint ($\star$) is equivalent to constraint ($\ast$), and so on.
Using the same argument as in the proof of \cref{theorem: uniqueness implications},
\begin{align*}
    \sum_{j=0}^n \binom{n}{j} b_j
    = \sum_{j=0}^n b_j K_j(0;n)
    = 2 \sum_{j=0}^{n-1} b_j K_j(0;n-1)
    = 2 \sum_{j=0}^{n-1} \binom{n-1}{j}b_j,
\end{align*}
which implies ($\star$) is logically equivalent to ($\ast$).

The ($\star\star$) and ($\star\star\star$) constraints are structurally identical, so we will address them together.
Firstly, for odd $i \in [n]$, notice that by symmetry,
\begin{align*}
    \sum_{j=0}^n b_j K_j(i;n) = \sum_{j=0}^n b_{n-j} (-1)^i K_{n-j}(i;n) = - \sum_{j=0}^n b_j K_j(i;n),
\end{align*}
which implies this sum must equal 0, and thus ($\star\star$) and ($\star\star\star$) trivially hold for odd $i$, and these constraints can be removed from the $(n,2e)$ LP.
For even $i\in[n-1]$, by the proof of \cref{theorem: uniqueness implications},
\begin{align*}
    \sum_{j=0}^n b_j K_j(i;n)
    &= 2 \sum_{j=0}^{n-1} b_j K_j(i;n-1),
\end{align*}
so the ($\star\star$) and ($\star\star\star$) constraints for even $i$ are equivalent for $i\in[n-1]$, and for odd $i \in [n-1]$,
\begin{align*}
    2 \sum_{j=0}^{n-1} b_j K_j(i;n-1)
    &= \sum_{j=0}^n b_j K_j(i+1;n).
\end{align*}
Thus, the constraint for odd $i$ in the $(n-1,2e-1)$ LP is equivalent to the constraint for even $i+1$ in the $(n,2e)$ LP.
Hence, using the symmetry constraints, the ($\star\star\star$) constraints hold for all $i\in[2e-1]$ if and only if they hold for all even $i \in [2e-2]$, which is equivalent to the ($\ast\ast\ast$) constraint holding for all $i \in [2e-2]$.

The ($\star\star$) and ($\ast\ast$) constraints are also equivalent, though we take a bit more care in differentiating the cases depending on the parity of $n$.
If $n$ is even, then the relevant ($\star\star$) constraints are the even $i \in [n]$; the even $i\in[n-2]$ constraints are equivalent to the ($\ast\ast$) constraints for $i \in [n-2]$, and the ($\star\star$) constraint for $i=n$ is equivalent to the ($\ast\ast$) constraint for $i=n-1$, and thus the ($\star\star$) constraints are equivalent to the ($\ast\ast$) constraints.
If $n$ is odd, then we directly see that the relevant ($\star\star$) constraints are the even ones, i.e., even $i \in [n-1]$, which following our previous reasoning are equivalent to the set of ($\ast\ast$) constraints for all $i \in [n-1]$.

This shows that the two symmetry-constrained LPs are equivalent.
\end{proof}

\section*{Data Availability Statement}
All data generated or analyzed during this study are included in this published article.

\section*{Acknowledgements}
We sincerely thank Henry Cohn for his input throughout the research process as well as editing feedback.
We are grateful to the anonymous referees who made helpful suggestions.

\bibliographystyle{plain}
\bibliography{ref}

\end{document}